\definecolor{webred}{rgb}{0.75,0,0}
\definecolor{webgreen}{rgb}{0,0.75,0}
\definecolor{refkey}{gray}{0.75}
\numberwithin{equation}{section}
\newtheorem{theo}{Theorem}[section]
\newtheorem{lem}{Lemma}[section]
\newtheorem{Def}[theo]{Definition}
\theoremstyle{remark}
\newtheorem{rem}{Remark}[section]
\newcommand{\y}{Y}
\newcommand{\lb}{-1}
\newcommand{\de}{\delta}
\newcommand{\ep}{\varepsilon}
\def\R{{\mathbb{R}}}
\def\d{\displaystyle}
\def\e{{\varepsilon}}
\def\p{\partial}
\date{}
\subjclass[2010]{35L15, 35L71,  35B44}
\keywords{Blow-up, Generalized Tricomi equation, Glassey exponent, Lifespan, Nonlinear wave equations, Scale-invariant damping, Time-derivative nonlinearity.}
\begin{document}

\title[Nonexistence result for the generalized Tricomi  equation]{Nonexistence result for the generalized Tricomi  equation with  the  scale-invariant damping, mass term  and time derivative nonlinearity}
\author[M.F. Ben Hassen, M. Hamouda, M. A. Hamza  and H.K. Teka]{Moahmed Fahmi Ben Hassen$^{1}$, Makram Hamouda$^{1}$, Mohamed Ali Hamza$^{1}$ and Hanen Khaled Teka$^{1}$}
\address{$^{1}$  Department of Basic Sciences, Deanship of Preparatory Year and Supporting Studies, Imam Abdulrahman Bin Faisal University, P. O. Box 1982, Dammam, Saudi Arabia.}

\medskip

\email{mfhassen@iau.edu.sa (M.F. Ben Hassen)}
\email{mmhamouda@iau.edu.sa (M. Hamouda)} 
\email{mahamza@iau.edu.sa (M.A. Hamza)}
\email{hkteka@iau.edu.sa (H.K. Teka)}

\pagestyle{plain}


\maketitle

\begin{abstract}
In this article, we consider the  damped wave equation  in the \textit{scale-invariant case} with time-dependent speed of propagation, mass term and time derivative nonlinearity. More precisely, we study the blow-up of the solutions to the following equation:
\begin{displaymath}
\d (E) \hspace{1cm} u_{tt}-t^{2m}\Delta u+\frac{\mu}{t}u_t+\frac{\nu^2}{t^2}u=|u_t|^p,
\quad \mbox{in}\ \R^N\times[1,\infty),
\end{displaymath}
that we associate with small initial data. Assuming some assumptions on the mass and damping coefficients, $\nu$ and $\mu>0$, respectively, we prove that blow-up region and the lifespan bound of the solution of $(E)$ remain the same as  the ones obtained for the case without mass, {\it i.e.} $(E)$ with $\nu=0$ which constitutes  itself a shift of the dimension $N$ by $\frac{\mu}{1+m}$ compared to the problem without damping and mass.   Finally, we think that the new bound for $p$ is a serious candidate to the critical exponent which characterizes   the threshold between the blow-up and the global existence regions.
\end{abstract}


\section{Introduction}
\par\quad

We consider the following semilinear wave equation which is characterized by  a scale-invariant damping term, a mass term and a time-dependent speed of propagation:
\begin{equation}
\label{G-sys}
\left\{
\begin{array}{l}
\d u_{tt}-t^{2m}\Delta u+\frac{\mu}{t}u_t+\frac{\nu^2}{t^2}u=|u_t|^p,
\quad \mbox{in}\ \R^N\times[1,\infty),\\
u(x,1)=\e u_0(x),\ u_t(x,1)=\e u_1(x), \quad  x\in\R^N,
\end{array}
\right.
\end{equation}
where $m$ is a nonnegative constant, $p>1$, $\mu, \nu \ge 0$ and $\e$ is a small positive number. We suppose that $u_0$ and $u_1$ are two non-negative, compactly supported functions   on  $B_{\R^N}(0,R), R>0$.

It is worth mentioning that the case $m=\mu= \nu=0$ leads to the  well-known Glassey conjecture for which the critical power $p_G$  is given by
\begin{equation}\label{Glassey}
p_G=p_G(N):=1+\frac{2}{N-1}.
\end{equation}
We recall here that the Glassey exponent $p_G$ is said to be critical in the sense that,  for $p>p_G$, the global existence can be obtained, and, for $p \le p_G$,  the nonexistence  of a global  solution under the smallness of the initial data is known; see e.g. \cite{Hidano1,Hidano2,John1,Sideris,Tzvetkov,Zhou1}.\\

Now, for $\mu > 0$ and $m=\nu=0$, a first blow-up region was obtained in \cite{LT2}  for $1<p\leq p_G(N+2\mu)$. Later,  a substantial improvement was performed in \cite{Palmieri-Tu-2019} using the integral representation formula, where the new bound is given by $p_G(N+ \mu)$ for $\mu \ge 2$.  Recently,  a new amelioration  \cite{Our2} shows that $p_G(N+ \mu)$ is the new critical exponent for all $\mu >0$. We recall  that this candidate has a better chance to be optimal. \\

Let us mention that, for $m=0$ and $\mu, \nu > 0$, the mass term may have no influence on the blow-up bound for the exponent $p$ when $\de \ge 0$, where $\de$ is  given by 
\begin{equation}\label{delta}
\de=(\mu-1)^2-4\nu^2.
\end{equation}
In fact,  the authors in \cite{Palmieri-Tu-2019} show the aforementioned observation when $\de \ge 1$. The latter result was extended to $\de \ge 0$ in \cite{Our3}. More precisely, we find again here the known upper bound for $p$, given by $p_G(N+\mu)$ for all $\de \ge 0$, which is the same as the case of massless ($\nu=0$). Hence, the results in \cite{Our2} and \cite{Our3}, corresponding to the cases with and without mass, respectively, are the same. This fact can be interpreted by saying that, as long $\nu > 0$ guarantees the 
validity of the condition  $\de \ge 0$, it is possible to consider the mass term as a lower order perturbation
that does not effect the blow-up condition for $p$. Indeed, for other nonlinearities, as for example the power nonlinearity or the combined nonlinearities, the same observation holds true, namely the massless influence in the dynamics of the solution for large time, see e.g. \cite{Our3, Palmieri2-2019, Palmieri-Reissig, Palmieri-Tu-2019}. \\

Now,  for $m>0$ and $\mu=\nu=0$, the  blow-up results and lifespan estimate of the solution  of \eqref{G-sys} are proven in \cite{LP-Tricomi}. More precisely, it is asserted that the blow-up occurs   for all $1<p \le p_G(N(1+m))$. In the same period but independently and with different methods,  an improvement is obtained in \cite{Lai2020} showing that the new  region is bounded by a plausible candidate for the critical exponent, namely  
\begin{equation}\label{pT}
p \le p_T(N,m):=1+\frac{2}{(1+m)(N-1)-m}, \ \text{for} \ N \ge 2.
\end{equation}
Using different approaches and as an application of the results obtained for the case with mixed nonlinearities, similar blow-up results as in \cite{Lai2020} are derived in \cite{Our5} for the problem \eqref{G-sys} with $m>0$ and $\mu=\nu=0$. Similar results for the combined nonlinearities are obtained in \cite{CLP,Our5}.\\

We aim in this work to the study the blow-up of the solutions of   the Cauchy problem (\ref{G-sys}) for $m,\mu, \nu^2>0$. In this case, the emphasis will be on the comprehension of the influence of the  parameter $m$, which characterizes the time-dependent speed of propagation, on the critical exponent and the lifespan estimate in comparison with the case $m=0$ studied in \cite{Our3}. Indeed, we will show that the usual observation, proved in \cite{Our3} for  (\ref{G-sys}) with $m=0$ and stating that the mass term has no influence in the upper bound for the range of $p$, holds true for  (\ref{G-sys}) with $m>0$ as well. Furthermore, the results in the present work for $m \ge 0$ constitute somehow an extension of \cite{HHP1,HHP2} where the negative time-dependent speed of propagation ($-1<m<0$) is investigated when $\nu =0$. We refer the reader to \cite{Palmieri, Palmieri2, Tsutaya} for more details about the case $-1<m<0$. Very recently, the extension to any $m<0$ is performed in \cite{Tsutaya3, Tsutaya2}.\\

The  structure of the paper is as follows. In Section \ref{sec-main}, we introduce   the energy solution of (\ref{G-sys}), and  we state our main theorem.  Then,  in Section \ref{aux}, some useful lemmas are presented; these will be used in the proof of the main result which will be the subject of Section \ref{sec-ut}. Finally, we show in the appendix in Section \ref{appendix2} several numerical properties of the solution to (\ref{G-sys}) using the associated functionals.

\section{Main Results}\label{sec-main}
\par

This section is devoted to the statement of the main result. However, before that we need to define the energy solution associated with (\ref{G-sys}).
\begin{Def}\label{def1}
Let  $u$ be such that
\begin{equation}\label{u-hyp}
u\in \mathcal{C}([1,T),H^1(\R^N))\cap \mathcal{C}^1([1,T),L^2(\R^N)) \ 
\text{and} \ u_t \in L^p_{loc}((1,T)\times \R^N),
\end{equation}
verifies, for all $\Phi\in \mathcal{C}_0^{\infty}(\R^N\times[1,T))$ and all $t\in[1,T)$, the following identity:
\begin{equation}
\label{energysol2}
\begin{array}{l}
\d\int_{\R^N}u_t(x,t)\Phi(x,t)dx-\int_{\R^N}u_t(x,1)u_1(x)dx  -\int_1^t  \int_{\R^N}u_t(x,s)\Phi_t(x,s)dx \,ds\vspace{.2cm}\\
\d+\int_1^t  \int_{\R^N}s^m \nabla u(x,s)\cdot\nabla\Phi(x,s) dx \,ds+\int_1^t  \int_{\R^N}\frac{\mu}{s}u_t(x,s) \Phi(x,s)dx \,ds\vspace{.2cm}\\
\d  +\int_1^t  \int_{\R^N}\frac{\nu^2}{s^2}u(x,s) \Phi(x,s)dx \,ds=\int_1^t \int_{\R^N}|u_t(x,s)|^p\Phi(x,s)dx \,ds,
\end{array}
\end{equation}
and the condition $u(x,1)=\varepsilon u_0(x)$ is fulfilled in $H^1(\mathbb{R}^N)$. Then, $u$ is called a weak solution of 
 (\ref{G-sys}) on $[1,T)$.
 
A straightforward computation shows that \eqref{energysol2} is equivalent to
\begin{equation}
\begin{array}{l}\label{energysol2-1}
\d \int_{\R^N}\big[u_t(x,t)\Phi(x,t)- u(x,t)\Phi_t(x,t)+\frac{\mu}{t}u(x,t) \Phi(x,t)\big] dx \vspace{.2cm}\\
\d +\int_1^t  \int_{\R^N}u(x,s)\left[\Phi_{tt}(x,s)-s^{2m}\Delta \Phi(x,s) -\frac{\partial}{\partial s}\left(\frac{\mu}{s}\Phi(x,s)\right)+\frac{\nu^2}{s^2} \Phi(x,s)\right]dx \,ds\vspace{.2cm}\\
\d =\int_{1}^{t}\int_{\R^N}|u_t(x,s)|^p\psi(x,s)dx \, ds + \e \int_{\R^N}\big[-u_0(x)\Phi_t(x,1)+\left(\mu u_0(x)+u_1(x)\right)\Phi(x,1)\big]dx.
\end{array}
\end{equation}
\end{Def}

Now, we define the multiplier $M(t)$ as follows:
\begin{equation}
\label{test1}
M(t):=t^{\mu}.
\end{equation}
Hence, we  rewrite Definition \ref{def1}, using $M(t)\Phi(x,t)$ as a test function, in  the following manner.
\begin{Def}\label{def2}
 We say that $u$ is a weak  solution of
 (\ref{G-sys}) on $[1,T)$
if $u$ satisfies \eqref{u-hyp} and
\begin{equation}
\label{energysol}
\begin{array}{l}
M(t)\d\int_{\R^N}u_t(x,t)\Phi(x,t)dx-\ep \int_{\R^N}u_1(x)\Phi(x,1)dx \vspace{.2cm}\\
\d -\int_1^t M(s) \int_{\R^N}u_t(x,s)\Phi_t(x,s)dx \,ds-\int_1^t s^{2m}M(s) \int_{\R^N} u(x,s)\Delta\Phi(x,s) dx \,ds\vspace{.2cm}\\
\d+\int_1^t  \int_{\R^N}\frac{\nu^2M(s)}{s^2}u(x,s) \Phi(x,s)dx \,ds=\int_1^t M(s) \int_{\R^N}|u_t(x,s)|^p\Phi(x,s)dx \,ds,
\end{array}
\end{equation}
 for all $\Phi\in \mathcal{C}_0^{\infty}(\R^N\times[1,T))$ and $t\in[1,T)$, and the condition $u(x,1)=\varepsilon u_0(x)$ is satisfied in $H^1(\mathbb{R}^N)$.
\end{Def}

In the following, we will state the main result in this article.

\begin{theo}
\label{blowup}
Let $\nu^2, \mu \ge 0, m\ge 0$  and $\de \ge 0$ (given by \eqref{delta}). Assume that  $u_0\in H^1(\R^N)$ and $u_1\in L^2(\R^N)$ are non-negative and compactly supported functions   on  $B_{\R^N}(0,R)$ which
  do not vanish everywhere and verify
  \begin{equation}\label{hypfg}
  \frac{\mu-1-\sqrt{\de}}{2}u_0(x)+u_1(x) > 0.
  \end{equation}
Then,  there exists $\e_0=\e_0(u_0,u_1,N,R,p,m,\mu,\nu)>0$ such that for any $0<\e\le\e_0$ the solution $u$  to \eqref{G-sys} which satisfies 
$$\mbox{\rm supp}(u)\ \subset\{(x,t)\in\R^N\times[1,\infty): |x|\le R+\phi_m(t)-\phi_m(1)\},$$
blows up in finite time $T_\e$, where
\begin{equation}\label{xi}
\phi_m(t):=\frac{t^{1+m}}{1+m},
\end{equation} 
and for $N \ge 2$
\begin{displaymath}
T_\e \leq
\d \left\{
\begin{array}{ll}
 C \, \e^{-\frac{2(p-1)}{2-((1+m)(N-1)-m+\mu)(p-1)}}
 &
 \ \text{for} \
 1<p<p_{T}(N+\frac{\mu}{m+1},m), \vspace{.1cm}
 \\
 \exp\left(C\e^{-(p-1)}\right)
&
 \ \text{for} \ p=p_{T}(N+\frac{\mu}{m+1},m),
\end{array}
\right.
\end{displaymath}
 where $p_{T}(N,m)$ is given by \eqref{pT}.\\
 Moreover, for $N=1$, we have
\begin{displaymath}
T_\e \leq
\d \left\{
\begin{array}{ll}
 C \, \e^{-\frac{2(p-1)}{2-(\mu-m)(p-1)}}
 & \text{for} \
1< p < p_{\mu,m}:= \left\{
\begin{array}{ll}
1+\frac{2}{\mu-m} &\text{if} \ m<\mu \le m+\frac{2}{p-1}, \\
\infty  &\text{if} \ \mu \le m,
\end{array}
\right.
 \\
 \exp\left(C\e^{-(p-1)}\right)
&
 \ \text{for} \ 1<p=1+\frac{2}{\mu-m}.
\end{array}
\right.
\end{displaymath}
In the above lifespan estimates, the constant $C$ is positive and independent of $\e$.
\end{theo}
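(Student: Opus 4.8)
The plan is to adapt the test-function/iteration scheme the authors developed for the massless ($\nu=0$) and non-Tricomi ($m=0$) cases to the present setting. The spatial test function is the Yordanov--Zhang function $\varphi(x)=\int_{S^{N-1}}e^{x\cdot\omega}\,d\omega$, which solves $\Delta\varphi=\varphi$ and satisfies $\varphi(x)\sim c_N|x|^{-(N-1)/2}e^{|x|}$ as $|x|\to\infty$. The scale-invariant damping is absorbed through the multiplier $M(t)=t^{\mu}$ of \eqref{test1}, while the hypothesis $\delta\ge0$ guarantees that the Euler part $\partial_t^2+\frac{\mu}{t}\partial_t+\frac{\nu^2}{t^2}$ has real characteristic roots $r_\pm=\frac{1-\mu\pm\sqrt{\delta}}{2}$. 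The sign condition \eqref{hypfg} is precisely the statement, in terms of $r_+$, that makes the functionals introduced below positive and increasing from $t=1$; this is where the mass coefficient $\nu$ enters, and the condition $\delta\ge0$ is exactly what lets me treat it as a lower-order perturbation.

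First I would introduce $G_0(t):=\int_{\R^N}u(x,t)\varphi(x)\,dx$ and $G_1(t):=G_0'(t)=\int_{\R^N}u_t(x,t)\varphi(x)\,dx$. Choosing $\Phi=\varphi$ in the weak formulation \eqref{energysol}, using $\Delta\varphi=\varphi$ and the finite-speed support condition, collapses the identity into the scalar relation
\begin{equation*}
G_0''(t)+\frac{\mu}{t}G_0'(t)-t^{2m}G_0(t)+\frac{\nu^2}{t^2}G_0(t)=\mathcal I(t):=\int_{\R^N}|u_t(x,t)|^p\varphi(x)\,dx\ge0.
\end{equation*}
Discarding the nonnegative source $\mathcal I$ and comparing $G_0$ with the growing homogeneous solution of this Tricomi-type ODE --- which a WKB/Bessel analysis shows behaves like $t^{-(m+\mu)/2}e^{\phi_m(t)}$ (with $\phi_m$ as in \eqref{xi}), the mass term $\frac{\nu^2}{t^2}G_0$ being of lower order under $\delta\ge0$ --- together with \eqref{hypfg} should yield the seed bounds $G_0(t)\gtrsim\e\,t^{-(m+\mu)/2}e^{\phi_m(t)}$ and a corresponding lower bound for $G_1$. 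The factor $e^{\phi_m(t)}$ is the engine of the blow-up.

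The second step converts this into a closed integral inequality. By Hölder's inequality on the support ball $\{|x|\le R+\phi_m(t)\}$,
\begin{equation*}
|G_1(t)|\le \mathcal I(t)^{1/p}\Big(\int_{|x|\le R+\phi_m(t)}\varphi(x)\,dx\Big)^{1-\frac1p},
\end{equation*}
and the asymptotics of $\varphi$ give $\int_{|x|\le\phi_m(t)}\varphi\,dx\sim\phi_m(t)^{(N-1)/2}e^{\phi_m(t)}$. Feeding this back into the ODE, together with the polynomial weight generated by $M(t)=t^{\mu}$, produces a lower bound for $\mathcal I(t)$ whose net polynomial weight governing the iteration is encoded by $D:=(1+m)(N-1)-m+\mu$; note that $D(p-1)=2$ is equivalent to $p=p_T(N+\frac{\mu}{m+1},m)$ by \eqref{pT}. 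Iterating the resulting inequality (a Kato-type argument, or the slicing method of Agemi--Kurokawa--Takamura to reach the borderline) gives a sequence of lower bounds whose exponents and constants grow fast enough that $G_0$ cannot stay finite past a time $T_\e$; tracking the $\e$-dependence yields the polynomial lifespan for $D(p-1)<2$ and the exponential one at $D(p-1)=2$. The case $N=1$, where $\varphi$ degenerates, is handled by the same scheme with the plain weight, for which $D=\mu-m$, reproducing the stated thresholds $p_{\mu,m}$.

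The main obstacle I anticipate is the sharp treatment of the time structure and the mass coupling: one must establish two-sided bounds for the growing solution of the $G_0$-ODE above that are uniform enough to survive the iteration, and show that $\frac{\nu^2}{t^2}G_0$ is genuinely lower order under $\delta\ge0$, so that it does not shift the exponent $D$. Equivalently, the delicate point is to prove the integral inequality for $\mathcal I(t)$ carrying exactly the weight encoded by $D$, since any loss there would move the critical power away from $p_T(N+\frac{\mu}{m+1},m)$.
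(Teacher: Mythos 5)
Your skeleton is the right one, and several ingredients match the paper exactly: the Yordanov--Zhang function, the seed bound $\int u\varphi\,dx\gtrsim \e\,t^{-(m+\mu)/2}e^{\phi_m(t)}$ (this is Lemma \ref{F1} after unwinding the weight), the net exponent $D=(1+m)(N-1)-m+\mu$, the identity $D(p-1)=2\iff p=p_T(N+\tfrac{\mu}{m+1},m)$, and a Kato-type conclusion (in the paper this is a single differential inequality, $H'\ge CH^p t^{-D(p-1)/2}$ in \eqref{inequalityfornonlinearin}, no genuine iteration is needed even at the critical exponent). The genuine gap is your pivotal claim that the sign condition \eqref{hypfg} makes ``the functionals positive and increasing from $t=1$.'' For the $u$-functional this is true, but for the $u_t$-functional $G_1(t)=\int u_t\varphi\,dx$ --- the one your H\"older step actually requires, since the nonlinearity is $|u_t|^p$ --- it is \emph{false} when $\nu>0$: the mass term makes the $u_t$-functional oscillate and take negative values near $t=1$ even for positive data with $\de\ge 0$ (the paper's Appendix B numerics, Figures 2--3, exhibit precisely this). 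Handling this failure is the core technical content of the paper: Lemma \ref{F1-2} first establishes an $\e$-\emph{independent} negative lower bound for $G_2$, and Lemma \ref{F11} then propagates a differential inequality from a fixed large time, showing that the negative contribution decays like $e^{-\frac32\phi_m(t)}$ while the data contribute $\sim\e$, so that coercivity $G_2(t)\ge C_{G_2}\e$ holds only for $t\ge T_1=A(1-\ln\e)^{1/(1+m)}$, a time that diverges as $\e\to 0$. Your proposal assumes this coercivity from $t=1$, which is exactly what fails; without it, neither the feed-back of H\"older into the ODE nor any iteration can start. (The delay turns out to be harmless for the stated lifespan bounds because it is only logarithmic in $\e^{-1}$, but one must know it is there and prove it.)

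A second, related weakness is structural: you keep the time-independent test function $\varphi$ and work with the second-order equation $G_0''+\frac{\mu}{t}G_0'+(\frac{\nu^2}{t^2}-t^{2m})G_0=\mathcal I$. Discarding $\mathcal I\ge 0$ to bound $G_0$ from below by a homogeneous solution can be justified (disconjugacy gives a nonnegative Green kernel), but to bound the \emph{derivative} $G_0'=\int u_t\varphi\,dx$ from below you would also need positivity of the differentiated kernel, and this is exactly where the mass term breaks the naive comparison --- it is the ODE-level manifestation of the oscillations above. The paper sidesteps this by building the conjugate-equation weight into the test function: $\psi(x,t)=\rho(t)\Psi(x)$ with $\rho(t)=t^{(\mu+1)/2}K_{\sqrt{\de}/(2(1+m))}(\phi_m(t))$ solving \eqref{lambda}, so that the weak formulation collapses into the first-order identities \eqref{eq6} and \eqref{eq5bis} with explicit integrating factors, supplemented by the auxiliary pair $F_1,F_2$ (weight $e^{-\phi_m(t)+\phi_m(1)}\Psi(x)$) to control the $\nu^2$-terms. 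To complete your argument you would need to either adopt this conjugate weight or supply an independent proof of the delayed coercivity of the $u_t$-functional; once that is in place, the rest of what you wrote goes through essentially as in the paper.
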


\begin{rem}
It is known that the mass term has no influence on the blow-up region. Once again we confirm here this property by showing in  Theorem \ref{blowup} that the upper bound of the lifespan does not depend on the mass parameter $\nu$. Furthermore, we believe that the critical value for $p$ obtained in Theorem \ref{blowup} is indeed optimal. However, a rigorous confirmation should be carried out through global existence results.
\end{rem}

\begin{rem}
It is notable here that the dimensions $N=1,2$ are somehow particular compared to higher dimensions. Indeed, in dimension $1$, we notice  a competition between the damping and Tricomi terms. Moreover, in dimension $2$, the Tricomi coefficient is not involved in the lifespan upper bound; we observe that the  critical exponent $p_{T}(2,m)$ does not depend on $m$ and hence $T_\e$ does not as well. However, for $N \ge 3$, the two aforementioned parameters are  of course implied in the blow-up.
\end{rem}

\section{Auxiliary results}\label{aux}
\par

One of the fundamental tools that we will employ in the present work is the use of an adequate test function. 
Hence, inspired by some previous works \cite{Our3,Our5,HHP1,HHP2}, we introduce the  positive test function  $\psi(x,t)$ solution to the conjugate equation corresponding to the linear problem, namely
\begin{equation}\label{lambda-eq}
\partial^2_t \psi(x, t)-t^{2m}\Delta \psi(x, t) -\frac{\partial}{\partial t}\left(\frac{\mu}{t}\psi(x, t)\right)+\frac{\nu^2}{t^2}\psi(x, t)=0.
\end{equation}
More precisely, on the one hand, the problem \eqref{G-sys} with constant speed of propagation, namely $m=0$, is studied in \cite{Our3}, and on the other hand, the massless problem with Tricomi term ($t^{2m}$), \textit{i.e.} \eqref{G-sys} with $\nu=0$, is investigated in \cite{Our5}. Therefore, in the present work, the study of \eqref{G-sys} constitutes somehow an extension of the results obtained in \cite{Our3} when $m>0$. Hence, it would be interesting to analyze the influence of the Tricomi coefficient $m$ on the critical exponent and the lifespan when the blow-up occurs. \\
Now, we define the test function $\psi(x,t)$ as follows:
\begin{equation}
\label{test11}
\psi(x,t):=\rho(t)\Psi(x);
\quad
\Psi(x):=
\left\{
\begin{array}{ll}
\d\int_{S^{N-1}}e^{x\cdot\omega}d\omega & \mbox{for}\ N\ge2,\vspace{.2cm}\\
e^x+e^{-x} & \mbox{for}\  N=1,
\end{array}
\right.
\end{equation}
where $\Psi(x)$ is introduced in \cite{YZ06} and verifies $\Delta\Psi=\Psi$, and $\rho(t)$   is solution to
\begin{equation}\label{lambda}
\frac{d^2 \rho(t)}{dt^2}-t^{2m}\rho(t)-\frac{d}{dt}\left(\frac{\mu}{t}\rho(t)\right)+\frac{\nu^2}{t^2}\rho(t)=0.
\end{equation}

The construction of a solution of \eqref{lambda} is performed in Appendix \ref{appendix1}. Moreover,  we list in the following lemma some useful properties of the function $\rho(t)$ that will be employed in the proof of our main result. 

\begin{lem}\label{lem-supp}
There exists a solution $\rho(t)$ of \eqref{lambda} which verifies the following properties:
\begin{itemize}
\item[{\bf (i)}] The function $\rho(t)$ is positive  on $[1,\infty)$. Furthermore,   there exists a constant $C_1$ such that   
\begin{equation}\label{est-rho}
C_1^{-1}t^{\frac{\mu-m}{2}} \exp(-\phi_m(t)) \le  \rho(t) \le C_1 t^{\frac{\mu-m}{2}} \exp(-\phi_m(t)), \quad \forall \ t \ge 1, 
\end{equation}
where $\phi_m(t)$ is defined by \eqref{xi}.
\item[{\bf (ii)}] For all $m>0$, we have
\begin{equation}\label{lambda'lambda1}
\d \lim_{t \to +\infty} \left(\frac{\rho'(t)}{t^m \rho(t)}\right)=-1.
\end{equation}
\end{itemize}
\end{lem}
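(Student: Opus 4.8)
The plan is to construct an explicit \emph{recessive} solution of \eqref{lambda} by reducing it to a modified Bessel equation, and then to read off all three stated properties from the classical behaviour of the modified Bessel function of the second kind. Expanding the derivative, \eqref{lambda} reads
\[
\rho'' - \frac{\mu}{t}\rho' + \left(-t^{2m} + \frac{\mu+\nu^2}{t^2}\right)\rho = 0 .
\]
I would first put this into normal form via the Liouville substitution $\rho(t) = t^{\mu/2}w(t)$, which removes the first-order term and yields $w'' = \left(t^{2m} + \frac{\delta-1}{4t^2}\right)w$. Here the coefficient of $t^{-2}$ collapses neatly to $(\delta-1)/4$ precisely because of the algebraic identity $\tfrac{\mu}{2} + \nu^2 - \tfrac{\mu^2}{4} = \tfrac{1-\delta}{4}$, with $\delta$ as in \eqref{delta}.

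Next I would pass to the Tricomi variable $s = \phi_m(t) = t^{1+m}/(1+m)$ together with $w(t) = t^{1/2}y(s)$. A direct computation turns the normal-form equation into the modified Bessel equation
\[
y'' + \frac{1}{s}y' - \left(1 + \frac{\kappa^2}{s^2}\right)y = 0, \qquad \kappa = \frac{\sqrt{\delta}}{2(1+m)},
\]
where the hypothesis $\delta\ge0$ is exactly what guarantees that $\kappa$ is real, hence that the recessive solution is real and of one sign. I then set $\rho(t) := t^{(\mu+1)/2}K_\kappa(\phi_m(t))$, with $K_\kappa$ the modified Bessel function of the second kind. Since $K_\kappa(z) > 0$ for every $z > 0$ when $\kappa$ is real, $\rho$ is positive on $[1,\infty)$, which settles the positivity part of (i).

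For the two-sided bound in (i) I would invoke the classical large-argument asymptotics $K_\kappa(z) = \sqrt{\pi/(2z)}\,e^{-z}\bigl(1+O(1/z)\bigr)$. Substituting $z = \phi_m(t)$ and using $\phi_m(t)^{-1/2}\sim\sqrt{1+m}\,t^{-(1+m)/2}$ shows that $\rho(t)/\bigl(t^{(\mu-m)/2}e^{-\phi_m(t)}\bigr)$ tends to a finite positive limit as $t\to\infty$. As this quotient is continuous and strictly positive on $[1,\infty)$ and has a positive finite limit at infinity, it is bounded above and below by positive constants, giving \eqref{est-rho} with a suitable $C_1$. For (ii), I would compute $\rho'/\rho = \tfrac{\mu+1}{2t} + t^m\,K_\kappa'(\phi_m)/K_\kappa(\phi_m)$, so that $\rho'/(t^m\rho) = \tfrac{\mu+1}{2t^{1+m}} + K_\kappa'(\phi_m)/K_\kappa(\phi_m)$, and then use $K_\kappa'(z)/K_\kappa(z)\to -1$ as $z\to\infty$ (immediate from the asymptotics of $K_\kappa$ and $K_\kappa'$, or from $2K_\kappa' = -(K_{\kappa-1}+K_{\kappa+1})$). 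Since $\tfrac{\mu+1}{2t^{1+m}}\to0$ for $m>0$, the limit $-1$ in \eqref{lambda'lambda1} follows.

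The genuinely delicate points, rather than the reduction itself, are twofold. First, one must observe that $\delta\ge0$ is exactly what keeps the Bessel order $\kappa$ real, so that $K_\kappa$ is positive on all of $(0,\infty)$; this is the structural reason the hypothesis appears, since an imaginary order would jeopardize the one-sidedness needed in \eqref{est-rho}. Second, the purely asymptotic ($t\to\infty$) estimates must be upgraded to bounds uniform over the whole half-line $[1,\infty)$, which I would handle by the continuity/compactness argument above rather than by tracking explicit constants near $t=1$. If the appendix instead builds $\rho$ through a Liouville--Green ansatz with an integral-equation error term, the same three properties would be extracted from the error bound, but the Bessel route makes both the positivity and the exact exponent $(\mu-m)/2$ most transparent.
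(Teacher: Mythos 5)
Your proposal is correct and follows essentially the same route as the paper's Appendix A: both reduce \eqref{lambda} to the modified Bessel equation (you via the Liouville normal form $\rho=t^{\mu/2}w$ followed by the change of variable $s=\phi_m(t)$, the paper via $\tau=\phi_m(t)$ first and then factoring out a power of $\tau$), arriving at the identical solution $\rho(t)=t^{\frac{\mu+1}{2}}K_{\frac{\sqrt{\delta}}{2(1+m)}}(\phi_m(t))$, with positivity from $K_\kappa>0$, the bound \eqref{est-rho} from the large-argument asymptotics of $K_\kappa$, and \eqref{lambda'lambda1} from the standard derivative identities for $K_\kappa$. Your explicit continuity-plus-limit argument upgrading the asymptotics to uniform two-sided bounds on $[1,\infty)$ is a point the paper leaves implicit, and is a welcome clarification.
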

\begin{proof}
See Appendix \ref{appendix1} for the detail of the proof.
\end{proof}


Note that the notation $C$ stands for a generic positive constant depending on the data ($p,m,\mu,\nu,N,u_0,u_1,\ep_0$) but not on $\ep$.  Obviously, the value of $C$ may change while keeping the same notation along this work. When necessary, we will specify  the dependency of the constant $C$.\\

The following lemma  holds true for the function $\Psi(x)$ defined in \eqref{test11}.
\begin{lem}[\cite{YZ06}]
\label{lem1} Let  $r>1$.
There exists a constant $C=C(m,N,R,r)>0$ such that
\begin{equation}
\label{psi}
\int_{|x|\leq R+\phi_m(t)-\phi_m(1)}\Big(\Psi(x)\Big)^{r}dx
\leq Ce^{r\phi_m(t)}(1+\phi_m(t))^{\frac{(2-r)(N-1)}{2}},
\quad\forall \ t\ge 1.
\end{equation}
\end{lem}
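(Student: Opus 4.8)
The plan is to reduce the claim to a sharp pointwise upper bound on $\Psi$ and then carry out an elementary radial integration against an exponential weight.

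First I would record the pointwise behaviour of $\Psi$. For $N=1$ one has $\Psi(x)=e^x+e^{-x}\le 2e^{|x|}$, and since the exponent $(N-1)/2$ vanishes nothing further is needed. For $N\ge2$ rotational invariance shows that $\Psi$ is radial; writing $|x|=\rho$, taking $x=\rho e_1$ and parametrising $S^{N-1}$ by the polar angle $\theta$ measured from $e_1$ gives
\begin{equation*}
\Psi(x)=\big|S^{N-2}\big|\int_0^{\pi}e^{\rho\cos\theta}(\sin\theta)^{N-2}\,d\theta,
\end{equation*}
which is a constant multiple of $\rho^{-\frac{N-2}{2}}I_{\frac{N-2}{2}}(\rho)$, where $I_{\alpha}$ denotes the modified Bessel function. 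Combining the classical large–argument asymptotics $I_{\alpha}(\rho)\sim e^{\rho}/\sqrt{2\pi\rho}$ with the boundedness of $\Psi$ near the origin (equivalently, applying Laplace's method to the angular integral, whose phase $\cos\theta$ is maximal at $\theta=0$ and produces the Gaussian factor $\rho^{-(N-1)/2}$), I obtain a uniform bound
\begin{equation}\label{plan-point}
\Psi(x)\le C(N)\,(1+|x|)^{-\frac{N-1}{2}}\,e^{|x|},\qquad x\in\R^N.
\end{equation}

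Next I would raise \eqref{plan-point} to the power $r$, pass to polar coordinates and merge the Jacobian $\rho^{N-1}$ with the decay factor. With $L:=R+\phi_m(t)-\phi_m(1)$ and using $\rho^{N-1}\le(1+\rho)^{N-1}$,
\begin{equation*}
\int_{|x|\le L}\big(\Psi(x)\big)^r\,dx
\le C\int_0^{L}(1+\rho)^{-\frac{r(N-1)}{2}}\rho^{N-1}e^{r\rho}\,d\rho
\le C\int_0^{L}(1+\rho)^{\frac{(2-r)(N-1)}{2}}e^{r\rho}\,d\rho=:CI,
\end{equation*}
the two polynomial factors fusing into the exponent $\frac{(2-r)(N-1)}{2}$ advertised in the statement. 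It then remains to show that $I\le C(1+L)^{\frac{(2-r)(N-1)}{2}}e^{rL}$, i.e. that the exponential weight concentrates the mass of $I$ at the endpoint $\rho=L$. Writing $a:=\frac{(2-r)(N-1)}{2}$ and performing the reversed substitution $s=L-\rho$ one has $I=e^{rL}\int_0^L(1+L-s)^a e^{-rs}\,ds$. When $a\ge0$ the factor $(1+L-s)^a\le(1+L)^a$ is uniform and the $s$–integral is bounded by $1/r$. When $a<0$ I would split the $s$–integral at $s=(1+L)/2$: on the near part $1+L-s\ge(1+L)/2$, so $(1+L-s)^a\le 2^{|a|}(1+L)^a$, while on the far part the weight $e^{-rs}\le e^{-r(1+L)/2}$ decays exponentially and dominates the at–most polynomial growth, so that piece is $\le C(1+L)^a$ as well. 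In all cases $I\le C(1+L)^a e^{rL}$.

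Finally I would translate back: since $R$ and $\phi_m(1)$ are fixed constants, $e^{rL}=e^{r(R-\phi_m(1))}e^{r\phi_m(t)}\le Ce^{r\phi_m(t)}$ and $1+L\le C\,(1+\phi_m(t))$ uniformly in $t\ge1$; substituting these into $I\le C(1+L)^a e^{rL}$ yields exactly \eqref{psi}. The only genuinely delicate point is the pointwise bound \eqref{plan-point}, and specifically pinning down the correct power $\frac{N-1}{2}$ of the polynomial correction; the radial integration and the endpoint concentration are routine, the only care being the case distinction on the sign of $a$.
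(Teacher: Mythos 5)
Your proof is correct, and it is essentially the argument behind the cited result: the paper itself gives no proof of this lemma, deferring to \cite{YZ06}, where the estimate is obtained exactly as you do it --- the pointwise bound $\Psi(x)\le C(1+|x|)^{-\frac{N-1}{2}}e^{|x|}$ via the modified-Bessel representation, followed by radial integration with the mass of the exponential weight concentrated at the endpoint of the interval.

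One small slip in your final translation step: for $r>2$ the exponent $a=\frac{(2-r)(N-1)}{2}$ is negative, so substituting the upper bound $1+L\le C(1+\phi_m(t))$ into $(1+L)^a$ flips the inequality; there you need the reverse comparison $1+\phi_m(t)\le C(1+L)$. This holds trivially, since $\phi_m(1)=\frac{1}{1+m}\le 1$ gives $1+L\ge \phi_m(t)$, and $\phi_m(t)\ge\frac{1}{2+m}\left(1+\phi_m(t)\right)$ for $t\ge 1$; so the two quantities are in fact comparable in both directions, uniformly in $t\ge1$. With that one line added (i.e., with the same case distinction on the sign of $a$ that you already made for the integral $I$), your proof is complete.
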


\par
In what follows we will introduce two functionals which are devoted to prove the main results. More precisely, let
\begin{equation}
\label{F1def}
G_1(t):=\int_{\R^N}u(x, t)\psi(x, t)dx,
\end{equation}
and
\begin{equation}
\label{F2def}
G_2(t):=\int_{\R^N}\p_tu(x, t)\psi(x, t)dx.
\end{equation}
The above functionals satisfy some uniform lower bounds in the sense that $\ep^{-1}t^{m}G_1(t)$ and $\ep^{-1}G_2(t)$ are coercive. Indeed, similarly as obtained in the case with mass term and constant speed of propagation \cite{Our3}, we will show here that the functional $G_1(t)$ is coercive starting from a given time independent of the initial data size, and that the functional  $G_2(t)$ may have some oscillations around zero to end up with a coercivity after a relatively large time going to $\infty$ as $\ep \to 0$. This is expected since the problem \eqref{G-sys} constitutes somehow an extension of the one considered in \cite{Our3}.


\begin{lem}
\label{F1}
Assume that the hypotheses in Theorem \ref{blowup} are fulfilled for $u$  a solution of the system \eqref{G-sys}. Then, there exists $T_0=T_0(m,\mu,\nu)>2$ such that 
\begin{equation}
\label{F1postive}
G_1(t)\ge C_{G_1}\, \e t^{-m}, 
\quad\text{for all}\ t \ge T_0,
\end{equation}
where $C_{G_1}$ is a positive constant depending probably  on $u_0$, $u_1$, $N,m, \mu$ and $\nu$.
\end{lem}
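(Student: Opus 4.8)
The plan is to feed the conjugate‑equation test function $\psi=\rho(t)\Psi(x)$ of \eqref{test11} into the weak formulation, collapse it into a closed first‑order linear ODE for $G_1$, and then integrate it with an explicit integrating factor. First I would take $\Phi=\psi$ in the identity \eqref{energysol2-1}. Since $\psi$ solves the conjugate equation \eqref{lambda-eq} exactly (equivalently $\rho$ solves \eqref{lambda} and $\Delta\Psi=\Psi$), the whole space‑time double integral vanishes; after justifying the use of the non‑compactly‑supported $\psi$ through the finite propagation speed / compact support of $u$ (a standard cut‑off approximation), what remains is
\[
H(t):=\int_{\R^N}\Big(u_t\psi-u\psi_t+\tfrac{\mu}{t}u\psi\Big)\,dx=\int_1^t\int_{\R^N}|u_t|^p\psi\,dx\,ds+\ep\, C_0,
\]
where $C_0=\rho(1)\displaystyle\int_{\R^N}\big[(\mu-\tfrac{\rho'(1)}{\rho(1)})u_0+u_1\big]\Psi\,dx$. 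Because $\psi\ge 0$ (Lemma \ref{lem-supp}(i) and $\Psi>0$), the function $H$ is non‑decreasing, so $H(t)\ge H(1)=\ep\, C_0$ for all $t\ge 1$. Here I would use only this constant lower bound; the nonlinear accumulation term merely helps and is not needed for $G_1$.

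Next I would convert $H$ into an ODE for $G_1$ alone. Differentiating \eqref{F1def} under the integral sign (licit by \eqref{u-hyp}) gives $G_1'=G_2+\tfrac{\rho'}{\rho}G_1$, hence $G_2=G_1'-\tfrac{\rho'}{\rho}G_1$; substituting into the definition of $H$ yields the identity
\[
H(t)=G_1'(t)+\Big(\tfrac{\mu}{t}-2\tfrac{\rho'(t)}{\rho(t)}\Big)G_1(t)=t^{-\mu}\rho(t)^{2}\,\frac{d}{dt}\Big(t^{\mu}\rho(t)^{-2}G_1(t)\Big).
\]
Therefore $\dfrac{d}{dt}\big(t^{\mu}\rho^{-2}G_1\big)=t^{\mu}\rho^{-2}H\ge \ep\, C_0\, t^{\mu}\rho(t)^{-2}$. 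Integrating from $1$ to $t$ and dropping the nonnegative boundary term $\rho(1)^{-2}G_1(1)\ge 0$ (recall $u_0\ge 0$, $\Psi>0$) gives $t^{\mu}\rho(t)^{-2}G_1(t)\ge \ep\, C_0\displaystyle\int_1^t s^{\mu}\rho(s)^{-2}\,ds$.

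To close, I would estimate the integral with Lemma \ref{lem-supp}(i). The upper bound in \eqref{est-rho} gives $\rho(s)^{-2}\ge C_1^{-2}s^{-(\mu-m)}\exp(2\phi_m(s))$, so the integrand is $\ge C_1^{-2}s^{m}\exp(2\phi_m(s))=\tfrac{C_1^{-2}}{2}\tfrac{d}{ds}\exp(2\phi_m(s))$, whence $\int_1^t s^{\mu}\rho^{-2}\,ds\ge \tfrac{C_1^{-2}}{2}\big(\exp(2\phi_m(t))-\exp(2\phi_m(1))\big)$. Choosing $T_0=T_0(m,\mu,\nu)>2$ large enough that $\exp(2\phi_m(t))\ge 2\exp(2\phi_m(1))$ for $t\ge T_0$, this is $\ge \tfrac{C_1^{-2}}{4}\exp(2\phi_m(t))$. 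Finally, multiplying by $t^{-\mu}\rho(t)^{2}$ and using the lower bound in \eqref{est-rho}, namely $\rho(t)^2\ge C_1^{-2}t^{\mu-m}\exp(-2\phi_m(t))$, the exponentials cancel and I obtain $G_1(t)\ge \tfrac{C_0 C_1^{-4}}{4}\,\ep\, t^{-m}$ for $t\ge T_0$, which is the claim with $C_{G_1}=C_0C_1^{-4}/4$ and $T_0$ independent of $\ep$.

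The main obstacle is the sign of $C_0$: the whole argument hinges on $C_0>0$. This is where hypothesis \eqref{hypfg} must be used, together with the precise value (or at least the bound $\tfrac{\rho'(1)}{\rho(1)}\le \tfrac{\mu+1+\sqrt{\de}}{2}$) of the initial derivative ratio of the specific solution $\rho$ constructed in Appendix \ref{appendix1}; indeed this bound forces $\mu-\tfrac{\rho'(1)}{\rho(1)}\ge \tfrac{\mu-1-\sqrt{\de}}{2}$, so that $(\mu-\tfrac{\rho'(1)}{\rho(1)})u_0+u_1\ge \tfrac{\mu-1-\sqrt{\de}}{2}u_0+u_1>0$ pointwise and $C_0>0$ follows upon integrating against $\Psi>0$. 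The condition $\de\ge 0$ is what keeps $\sqrt{\de}$ real and the construction of $\rho$ valid. A secondary technical point is the rigorous justification of using $\psi$ as a test function despite its lack of compact support, handled by exploiting the support property of $u$.
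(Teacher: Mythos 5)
Your proposal is correct and follows essentially the same route as the paper: the same substitution $\Phi=\psi$ producing the ODE $G_1'(t)+\Gamma(t)G_1(t)=\int_1^t\int_{\R^N}|u_t|^p\psi\,dx\,ds+\e\,C(u_0,u_1)$, the same integrating factor $t^{\mu}\rho^{-2}(t)$, and the same positivity argument for the initial-data constant via the Bessel identity \eqref{lambda'lambda2} together with \eqref{hypfg}. The only cosmetic difference is the final integral estimate: the paper restricts the integral to $(t/2,t)$ and invokes the asymptotics \eqref{Kmu} directly, whereas you integrate over $(1,t)$ using \eqref{est-rho} and the observation that $s^{m}e^{2\phi_m(s)}=\tfrac12\tfrac{d}{ds}e^{2\phi_m(s)}$ -- both are equally valid.
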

\begin{proof} 
Let $t \in [1,T)$. Substituting $\Phi(x, t)$ by $\psi(x, t)$  in \eqref{energysol2-1}, then using \eqref{lambda-eq} and \eqref{test11}, we get
\begin{equation}
\begin{array}{l}\label{eq5}
\d \int_{\R^N}\big[u_t(x,t)\psi(x,t)- u(x,t)\psi_t(x,t)+\frac{\mu}{t}u(x,t) \psi(x,t)\big]dx
\vspace{.2cm}\\
\d=\int_1^t\int_{\R^N}|u_t(x,s)|^p\psi(x,s)dx \, ds 
+\d \e \, C(u_0,u_1),
\end{array}
\end{equation}
where 
\begin{equation}\label{Cfg}
C(u_0,u_1):=\int_{\R^N}\big[\big(\mu\rho(1)-\rho'(1)\big)u_0(x)+\rho(1)u_1(x)\big]\Psi(x)dx.
\end{equation}
From \eqref{lmabdaK} and \eqref{lambda'lambda2}, we see that
\begin{equation}\label{Cfg1}
\mu\rho(1)-\rho'(1)=\frac{\mu-1-\sqrt{\de}}{2}K_{\frac{\sqrt{\de}}{2(1+m)}}(\phi_m(1))+K_{\frac{\sqrt{\de}}{2(1+m)} +1}(\phi_m(1)),
\end{equation}
and consequently  we have
\begin{align}\label{Cfg}
C(u_0,u_1)&=K_{\frac{\sqrt{\de}}{2(1+m)}}(\phi_m(1)) \int_{\R^N} \big[\frac{\mu-1-\sqrt{\de}}2u_0(x)+u_1(x)\big]\Psi(x)dx \\&\ +K_{\frac{\sqrt{\de}}{2(1+m)} +1}(\phi_m(1))\int_{\R^N}u_0(x)\Psi(x)dx. \nonumber
\end{align}
Therefore the positivity of  the constant $C(u_0,u_1)$ is straightforward from  \eqref{hypfg}.\\
Recalling the definition of $G_1$, given by \eqref{F1def},  and \eqref{test11}, then  \eqref{eq5} implies that
\begin{equation}
\begin{array}{l}\label{eq6}
\d G_1'(t)+\Gamma(t)G_1(t)=\int_1^t\int_{\R^N}|u_t(x,s)|^p\psi(x,s)dx \, ds +\e \, C(u_0,u_1),
\end{array}
\end{equation}
where 
\begin{equation}\label{gamma}
\Gamma(t):=\frac{\mu}{t}-2\frac{\rho'(t)}{\rho(t)}.
\end{equation}
Taking the product of  \eqref{eq6} by $\d \frac{t^\mu}{\rho^2(t)}$ and integrating on $(1,t)$, we infer that
\begin{align}\label{est-G1}
 G_1(t)
\ge G_1(1)\frac{\rho^2(t)}{t^\mu}+{\e}C(u_0,u_1)\frac{\rho^2(t)}{t^\mu}\int_1^t\frac{s^\mu}{\rho^2(s)}ds.
\end{align}
Using the definition of  $\phi_m(t)$, given by \eqref{xi}, the positivity of $G_1(1)$ and   \eqref{lmabdaK},  the estimate \eqref{est-G1} yields
\begin{align}\label{est-G1-1}
 G_1(t)
\ge {\e}C(u_0,u_1) t K^2_{\frac{\sqrt{\de}}{2(1+m)}}\left(\phi_m(t)\right)\int^t_{t/2}\frac{1}{sK^2_{\frac{\sqrt{\de}}{2(1+m)}}\left(\phi_m(s)\right)}ds, \quad \forall \ t \ge 2.
\end{align}
Thanks to \eqref{Kmu}, we deduce that there exists $T_0=T_0(m,\mu,\nu)>2$ such that 
\begin{align}\label{est-double}
\phi_m(t)K^2_{\frac{\sqrt{\de}}{2(1+m)}}(\phi_m(t))>\frac{\pi}{4} e^{-2\phi_m(t)} \quad \text{and}  \quad \phi_m(t)^{-1}K^{-2}_{\frac{\sqrt{\de}}{2(1+m)}}(\phi_m(t))>\frac{1}{\pi} e^{2\phi_m(t)}, \ \forall \ t \ge T_0/2.
\end{align}
Combining \eqref{est-double} and  \eqref{est-G1-1}, and remembering \eqref{xi},   we see that
\begin{align}\label{est-U-2}
 G_1(t)
&\ge \e \frac{C(u_0,u_1)}{4}t^{-m}e^{-2\phi_m(t)}\int^t_{t/2}\phi_m'(s)e^{2\phi_m(s)}ds.
\end{align}
Hence, we conclude that
\begin{align}\label{est-G1-3}
 G_1(t)
\ge \e \kappa C(u_0,u_1)t^{-m}, \ \forall \ t \ge T_0,
\end{align}
where $\d \kappa=\kappa(m)$ is a positive constant.

The proof of Lemma \ref{F1} is thus completed.
\end{proof}

In the next lemma, we will derive a negative lower bound for the functional $G_2(t)$ which is independent of $\ep$. As aforementioned, the functional $G_2(t)$ will not be positive for all $t \ge 1$; see Appendix \ref{appendix2} for some numerical simulations.

\begin{lem}
\label{F1-2}
Assume the existence of an energy solution $u$ of the system \eqref{G-sys}  with initial data satisfying the hypotheses as in Theorem \ref{blowup}. Then, there exists $\ep_0>0$ such that, for all $t \in (1,T)$, we have 
\begin{align}\label{G2+bis7}
\d G_2(t) +\mathcal{K} (\nu^2+ \nu^{\frac{2p}{p-1}})  e^{\frac{2p-1}{p-1}\phi_m(t)} t^{\frac{(m+1)(N-1)+3m}{2}} \ge  0,
\end{align}
where $\mathcal{K}$ is a positive constant which may depend on $u_0$, $u_1$, m, p, $N,R,  \ep_0$ and $\mu$ but not on $\ep$ and $\nu$.
\end{lem}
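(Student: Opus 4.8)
The plan is to derive an evolution equation for $G_2(t)$ analogous to the one for $G_1(t)$ obtained in Lemma \ref{F1}, and then to control the ``bad'' terms---those carrying the mass coefficient $\nu^2$---that prevent $G_2$ from being nonnegative outright. First I would substitute $\Phi(x,t)=\psi(x,t)$ into the original energy identity \eqref{energysol2} (rather than the integrated-by-parts form \eqref{energysol2-1}), so that the term $\int_{\R^N}u_t(x,t)\psi(x,t)dx=G_2(t)$ appears directly. Differentiating in $t$ and using $\Delta\Psi=\Psi$ together with the ODE \eqref{lambda} satisfied by $\rho$, I expect the linear part to collapse into a first-order equation of the schematic form
\begin{equation}\label{G2-ode}
G_2'(t)+\frac{\mu}{t}G_2(t)=\int_{\R^N}|u_t(x,t)|^p\psi(x,t)dx+t^{2m}\int_{\R^N}u(x,t)\psi(x,t)dx-\frac{\nu^2}{t^2}\int_{\R^N}u(x,t)\psi(x,t)dx,
\end{equation}
where the second term on the right is manifestly nonnegative (it equals $t^{2m}G_1(t)\ge 0$), the nonlinear integral is nonnegative, and only the mass term $-\frac{\nu^2}{t^2}G_1(t)$ has an unfavorable sign. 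The precise coefficients will have to be read off carefully, but the structural point is that the only obstruction to positivity of $G_2$ comes from the mass term proportional to $\nu^2$.

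Next I would integrate \eqref{G2-ode} against the integrating factor $t^{\mu}=M(t)$. Since the nonlinear term and the $t^{2m}G_1$ term contribute with the right sign, a lower bound for $G_2(t)$ is obtained by discarding them and retaining only the contribution of the initial data together with the mass term. Thus I expect an inequality of the form
\begin{equation}\label{G2-lowerbound}
G_2(t)\ge t^{-\mu}\Big(G_2(1)+\e\, C(u_0,u_1)\int_1^t \cdots ds-\nu^2\int_1^t s^{\mu-2}G_1(s)\,ds\Big),
\end{equation}
so that a negative lower bound for $G_2$ amounts to an \emph{upper} bound for the quantity $\nu^2\int_1^t s^{\mu-2}G_1(s)\,ds$. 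Here is the key tension to resolve: Lemma \ref{F1} furnishes only a \emph{lower} bound for $G_1$, whereas I now need an upper bound. I would obtain it by returning to \eqref{eq6}, estimating the nonlinear integral $\int_1^t\int_{\R^N}|u_t|^p\psi\,dx\,ds$ from above via H\"older's inequality in the spatial variable (splitting $\psi=\psi^{1/p}\psi^{1/p'}$) so as to relate it back to $|G_2|$ and to the weight estimate \eqref{psi} of Lemma \ref{lem1}. This is exactly where the exponential factor $e^{\frac{2p-1}{p-1}\phi_m(t)}$ and the polynomial power $t^{\frac{(m+1)(N-1)+3m}{2}}$ in the claimed bound \eqref{G2+bis7} should emerge, since $\int\Psi^{p'}$ produces $e^{p'\phi_m(t)}(1+\phi_m(t))^{(2-p')(N-1)/2}$ and $p'=\frac{p}{p-1}$ gives the stated exponent $\frac{2p-1}{p-1}=1+p'$ once combined with the $\rho$-asymptotics \eqref{est-rho}.

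The main obstacle, I expect, will be closing the estimate for the nonlinear term without a usable upper bound on $G_2$ itself---a genuine circularity, since the nonlinear integral involves $|u_t|^p$ and cannot be bounded directly by $G_2$ alone. The way around this is to set up a bootstrap or continuity argument on a maximal interval: assuming \eqref{G2+bis7} fails at some first time and using the already-established coercivity of $G_1$ from Lemma \ref{F1} to feed a uniform a priori bound on the nonlinear contribution, one derives a contradiction. The role of the smallness $\e\le\e_0$ is precisely to guarantee that the initial-data term and the $\nu$-independent part of the nonlinear term stay dominated by the exponential barrier on the right-hand side of \eqref{G2+bis7}; this is why $\mathcal{K}$ is allowed to depend on $\ep_0$ but must be independent of $\e$ and $\nu$. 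Tracking the $\nu$-dependence carefully---so that the mass contribution appears exactly as $\nu^2+\nu^{\frac{2p}{p-1}}$, with the second power arising from the H\"older estimate applied to the mass-driven part of $u_t$---will require bookkeeping but no new idea once the integrating-factor computation and the weight estimate \eqref{psi} are in hand.
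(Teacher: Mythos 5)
Your structural intuition is sound---the only obstruction to positivity of $G_2$ is the mass term, an integrating-factor argument should do the work, and your exponent bookkeeping via \eqref{psi} is essentially right---but both devices you invoke to close the estimate fail. First, your H\"older step goes in the wrong direction: H\"older gives $|G_2(t)|\le\left(\int_{\R^N}|u_t|^p\psi\,dx\right)^{1/p}\left(\int_{|x|\le R+\phi_m(t)-\phi_m(1)}\psi\,dx\right)^{(p-1)/p}$, i.e.\ the nonlinear integral controls $|G_2|$, never the reverse; $u_t$ can be arbitrarily large in $L^p(\psi\,dx)$ while its weighted mean $G_2$ stays small through cancellation, so no upper bound on $\int_1^t\int_{\R^N}|u_t|^p\psi\,dx\,ds$ in terms of $G_2$ exists. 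Second, the bootstrap cannot be closed as described: you propose to feed it ``a uniform a priori bound on the nonlinear contribution'' coming from the coercivity of $G_1$, but Lemma \ref{F1} is a \emph{lower} bound and provides no control from above on $G_1$ or on the nonlinearity; the circularity you correctly identified is therefore not resolved by your plan.

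The paper's proof avoids needing any upper bound on $G_1$ or on the nonlinear term, via two ideas absent from your proposal. It works with the auxiliary functionals $F_1,F_2$ built on the simpler test function $\psi_0(x,t)=e^{\phi_m(1)-\phi_m(t)}\Psi(x)$ (so that exactly $\partial_t\psi_0=-t^m\psi_0$) inserted into the multiplier identity \eqref{energysol}, and then: (a) it eliminates $F_1$ from the mass terms by integrating the first-order relation $F_1'+t^mF_1=F_2$ into the Duhamel formula \eqref{def2313}, which converts the $\nu^2$-contributions into $\nu^2 t^m\int_1^t e^{\phi_m(s)}|F_2(s)|\,ds$ plus a harmless data term; (b) it bounds $|F_2|\le\int_{\R^N}|u_t|\psi_0\,dx$ and applies Young's inequality pointwise, $\tilde C\nu^2e^{\phi_m}|u_t|\le|u_t|^p+C\nu^{\frac{2p}{p-1}}e^{\frac{p}{p-1}\phi_m}$, as in \eqref{f2-less}, so that the $|u_t|^p$ piece is \emph{absorbed by the nonlinear term already present with a favorable sign} in \eqref{F1+bis45}, leaving only a pure weight term carrying $\nu^{\frac{2p}{p-1}}$---this is exactly where the combination $\nu^2+\nu^{\frac{2p}{p-1}}$ and the exponential barrier in \eqref{G2+bis7} come from. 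After that, a direct integration against $e^{2\phi_m(t)}M(t)$ and the conversion $G_2(t)=e^{\phi_m(t)-\phi_m(1)}\rho(t)F_2(t)$ together with \eqref{est-rho} finish the proof. With this absorption mechanism in hand, the estimate holds for every $t\in(1,T)$ outright, no continuity argument is needed, and $\ep_0$ enters only to bound the data term $F_1(1)\le\ep_0\int_{\R^N}u_0\Psi\,dx$ uniformly in $\ep$, not to run a smallness absorption.
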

\begin{proof} 
Let $t \in [1,T)$. Choosing $\psi_0(x, t):=e^{-\frac{t^{m+1}-1}{m+1}}\Psi(x)$ (which satisfies $\partial_t \psi_{0}(x, t)=-t^m \psi_0(x, t)$), where $\Psi(x)$ is given by \eqref{test11}, as a test function in \eqref{energysol}, and recalling $M(t)=t^{\mu}$ and $\Delta\psi_0=\psi_0$, we obtain
\begin{equation}
\begin{array}{l}\label{eq4}
\d M(t)\int_{\R^N}u_t(x,t)\psi_0(x,t)dx
-\e\int_{\R^N}u_1(x)\psi_0(x,1)dx \vspace{.2cm}\\
\d+\int_1^tM(s)s^m\int_{\R^N}\left\{
u_t(x,s) \psi_0(x,s)-s^{m}u(x,s)\psi_0(x,s)\right\}dx \, ds \vspace{.2cm}\\
\d+\int_1^t  \int_{\R^N}\frac{\nu^2M(s)}{s^2}u(x,s) \psi_0(x,s)dx \,ds=\int_1^tM(s)\int_{\R^N}|u_t(x,s)|^p\psi_0(x,s)dx \, ds.
\end{array}
\end{equation}
Let
\begin{equation}
\label{F1def1}
F_1(t):=\int_{\R^N}u(x, t)\psi_0(x, t)dx, 
\end{equation}
and
\begin{equation}
\label{F2def1}
F_2(t):=\int_{\R^N}u_t(x, t)\psi_0(x, t)dx.
\end{equation}
Observe that 
\begin{equation}\label{eq5-1-f1f2}
\d F_1'(t)=\int_{\R^N}\left\{
u_t(x,t) \psi_0(x,t)-t^{m}u(x,t)\psi_0(x,t)\right\}dx,
\end{equation}
 then  the above equation with \eqref{eq4} yield
\begin{equation}
\begin{array}{l}\label{eq5-1-0}
\d M(t)(F_1'(t)+t^{m}F_1(t))-\e\int_{\R^N}u_1(x)\psi_0(x,1)dx+\int_1^tM(s)s^m F_1'(s) ds
\vspace{.2cm}\\
\d +\int_1^t  \frac{\nu^2M(s)}{s^2}F_1(s) \,ds =\int_1^tM(s)\int_{\R^N}|u_t(x,s)|^p\psi_0(x,s)dx \, ds,
\end{array}
\end{equation}
It is easy to see that $F_1(t)$ satisfies 
\begin{equation}\label{eq5-1-new}
\d \int_1^tM(s)s^mF_1'(s) ds=- \int_1^t (M(s)s^m)'F_1(s) ds+M(t)t^mF_1(t)-F_1(1).
 \end{equation}
 Combining \eqref{eq5-1-0} and \eqref{eq5-1-new}, we obtain
\begin{equation}
\begin{array}{l}\label{eq5-1}
\d M(t)(F_1'(t)+2t^{m}F_1(t))
-{\e}C_0(u_0,u_1) +\int_1^t  \frac{\nu^2M(s)}{s^2}F_1(s) \,ds \vspace{.2cm}\\
\d=\int_1^t(M(s)s^m)'F_1(s) ds+\int_1^tM(s)\int_{\R^N}|u_t(x,s)|^p\psi_0(x,s)dx \, ds,
\end{array}
\end{equation}
where 
$$C_0(u_0,u_1):=\int_{\R^N}\left\{u_0(x)+u_1(x)\right\}\Psi(x)dx.$$
Again, from the definitions of $F_1$ and  $F_2$, given  by \eqref{F1def1} and  \eqref{F2def1}, respectively, and the equality \eqref{eq5-1-f1f2} rewritten as
 \begin{equation}\label{def231-bis}\d F_1'(t) +t^{m}F_1(t)= F_2(t),\end{equation}
 the equation  \eqref{eq5-1} gives
\begin{equation}
\begin{array}{l}\label{eq5bis1}
\d M(t)(F_2(t)+t^{m}F_1(t))
-{\e}C_0(u_0,u_1) +\int_1^t  \frac{\nu^2M(s)}{s^2}F_1(s) \,ds \vspace{.2cm}\\
\d=\int_1^t(M(s)s^m)'F_1(s) ds+\int_1^tM(s)\int_{\R^N}|u_t(x,s)|^p\psi_0(x,s)dx \, ds.
\end{array}
\end{equation}
By differentiating \eqref{eq5bis1} in time and employing \eqref{def231-bis}, we get
\begin{align}\label{F1+bis1}
\frac{d}{dt} \left\{F_2(t)M(t)\right\}+   2M(t)t^{m}F_2(t)
= M(t)t^{m}\left(F_2(t)+t^{m}F_1(t)\right)-\frac{\nu^2M(t)}{t^2}F_1(t)\\+M(t)\int_{\R^N}|u_t(x,t)|^p\psi_0(x,t)dx.\nonumber
\end{align}
Combining   \eqref{eq5bis1} and \eqref{F1+bis1}, we have
\begin{align}\label{F1+bis3}
\begin{array}{l}
\d \frac{d}{dt} \left\{F_2(t)M(t)\right\}+   2M(t)t^{m}F_2(t)
=\d  {\e}C_0(u_0,u_1) \, t^{m} \vspace{.2cm}\\
\d +t^{m}\int_1^tM(s)\int_{\R^N}|u_t(x,s)|^p\psi_0(x,s)dx \, ds+M(t)\int_{\R^N}|u_t(x,t)|^p\psi_0(x,t)dx\vspace{.2cm}\\
\d +\Sigma_1(t)+\nu^2\Sigma_2(t)+\nu^2\Sigma_3(t),
\end{array}
\end{align}
where $(M(t)=t^{\mu})$, and
\begin{equation}\label{sigma11}
\d \Sigma_1(t):=\d   t^{m}\int_1^t (M(s)s^m)'F_1(s) ds=\d (\mu+m)  t^{m}\int_1^t s^{\mu+m-1}F_1(s) ds,
\end{equation}
\begin{equation}\label{sigma11--2}
\d \Sigma_2(t):=\d  - t^{m}\int_1^t  \frac{M(s)}{s^2}F_1(s) \,ds= - t^{m}\int_1^t  s^{\mu-2}F_1(s) \,ds,
\end{equation}
and
\begin{equation}\label{sigma21}
\Sigma_3(t):=-\frac{M(t)}{t^2}F_1(t)=-  t^{\mu-2}F_1(t).
\end{equation}
Using the fact that $G_1(t)=e^{\phi_m(t)-\phi_m(1)} \rho(t)F_1(t)$ together with the positivity of $G_1(t)$, in view of \eqref{est-G1}, we easily obtain that   $\Sigma_1(t) \ge 0$.\\
By integrating  \eqref{def231-bis}, we have
\begin{equation}\label{def2313}\d F_1(t)= F_1(1)e^{\phi_m(1)-\phi_m(t)} +e^{-\phi_m(t)} \int_1^t e^{\phi_m(s)}F_2(s)ds.
\end{equation}
Inserting the above identity in   \eqref{sigma11--2} and  integrating by parts, we infer that
\begin{align}
\begin{array}{c}
\d \int_1^t  s^{\mu-2}F_1(s) \,ds= F_1(1)\int_1^t s^{\mu-2}e^{\phi_m(1)-\phi_m(s)} ds\vspace{.2cm}\\
 \d + \left(\int_1^t s^{\mu-2}e^{-\phi_m(s)}ds\right) \left(\int_1^t e^{\phi_m(s)}F_2(s)  ds\right)- \int_1^t e^{\phi_m(s)}F_2(s)\left(\int_1^s \tau^{\mu-2}e^{-\phi_m(\tau)} d\tau\right) ds.
\end{array}
\end{align}
Thanks to the boundedness of $\d \int_1^t s^{\mu-2}e^{-\phi_m(s)}ds$  independently of $t$ since it can be easily estimated by above by $\d \int_1^{\infty} s^{\mu-2}e^{-\phi_m(s)}ds \le C$, we obtain
\begin{equation}\label{sigma111}
|\Sigma_2(t)| \le C F_1(1)t^{m} + Ct^{m} \int_1^t e^{\phi_m(s)}|F_2(s)| ds.
\end{equation}
Now, for the term $\Sigma_3(t)$, performing similar estimates as we did for $\Sigma_2(t)$ and using  \eqref{def2313}, we end up with an analogous estimate as \eqref{sigma111}, namely
\begin{equation}\label{sigma1112}
|\Sigma_3(t)| \le C F_1(1)t^{m} + Ct^{m} \int_1^t e^{\phi_m(s)}|F_2(s)| ds ;
\end{equation}
where we recall here that $\d F_1(1)=\ep \int_{\R^N}u_0(x)\Psi(x)dx$ thanks to the definition \eqref{F1def1}.\\
Plugging \eqref{sigma111} and \eqref{sigma1112} in \eqref{F1+bis3} and using the fact that $M(t) \ge 1$, we get
\begin{align}\label{F1+bis45}
\begin{array}{c}
\d \frac{d}{dt} \left\{F_2(t)M(t)\right\}+   2M(t)t^{m}F_2(t)
\ge \d  t^{m}\int_1^t\int_{\R^N}|u_t(x,s)|^p\psi_0(x,s)dx \, ds\vspace{.2cm}\\
\d -\d \tilde{C} \ep_0\nu^2t^{m}  - \tilde{C} \nu^2 t^{m} \int_1^t e^{\phi_m(s)}|F_2(s)| ds,
\end{array}
\end{align}
where $\tilde{C}=\tilde{C}(u_0,u_1,\mu, m, N)$.\\
Thanks to \eqref{F2def1}, the definition of $F_2(t)$, and Lemma \ref{lem1}, we see that
\begin{equation}\label{f2-less}
\begin{array}{rcl}
\d \tilde{C}\nu^2 e^{\phi_m(t)}|F_2(t)| &\le&\d  \int_{\R^N}|u_t(x,t)|^p\psi_0(x,t)dx + C\nu^{\frac{2p}{p-1}}e^{\frac{p}{p-1} \phi_m(t)}\int_{|x|\leq R+\phi_m(t)-\phi_m(1)}\psi_0(x,t)dx \vspace{.2cm}\\
&\le& \d \int_{\R^N}|u_t(x,t)|^p\psi_0(x,t)dx + C \nu^{\frac{2p}{p-1}} e^{\frac{2p-1}{p-1}\phi_m(t)}(\phi_m(t))^{\frac{N-1}{2}}.
\end{array}
\end{equation}
By simply integrating in time \eqref{f2-less}, \eqref{F1+bis45} becomes
\begin{align}\label{F1+bis5}
\begin{array}{rcl}
\d \frac{d}{dt} \left\{F_2(t)M(t)\right\}+   2M(t)t^mF_2(t)
+C \nu^2t^{m}+ C \nu^{\frac{2p}{p-1}} e^{\frac{2p-1}{p-1}\phi_m(t)} t^{\frac{(m+1)(N-1)}{2}+m}  \ge 0,
\end{array}
\end{align}
that we rewrite as follows:
\begin{align}\label{F1+bis4}
\frac{d}{dt} \left\{e^{2 \phi_m(t)}F_2(t)M(t)\right\}+Ce^{2 \phi_m(t)}\left( \nu^2t^{m}+  \nu^{\frac{2p}{p-1}} e^{\frac{2p-1}{p-1}\phi_m(t)} t^{\frac{(m+1)(N-1)}{2}+m}\right)\ge 0.
\end{align}
Now, integrating  \eqref{F1+bis4} yields
\begin{align}\label{F1+bis5}
F_2(t) +C\frac{e^{-2\phi_m(t)}}{M(t)}\int_1^t e^{2\phi_m(s)}\left( \nu^2s^{m}+  \nu^{\frac{2p}{p-1}} e^{\frac{2p-1}{p-1}\phi_m(s)} s^{\frac{(m+1)(N-1)}{2}+m}\right) ds\ge 0.
\end{align}
Hence, we can easily see that
\begin{align}\label{F1+bis6}
F_2(t) +C \nu^2t^{m-\mu}+ C \nu^{\frac{2p}{p-1}}e^{\frac{2p-1}{p-1}\phi_m(t)} t^{\frac{(m+1)(N-1)}{2}+m-\mu} \ge 0.
\end{align}
Recalling the relationship  $G_2(t)=e^{\phi_m(t)-\phi_m(1)} \rho(t)F_2(t)$, we infer that
\begin{align}\label{G2+bis61}
G_2(t) +C \nu^2t^{m-\mu}e^{\phi_m(t)} \rho(t)+ C \nu^{\frac{2p}{p-1}} \rho(t)e^{\frac{3p-2}{p-1}\phi_m(t)} t^{\frac{(m+1)(N-1)}{2}+m-\mu} \ge 0.
\end{align}
Finally, combining  \eqref{est-rho} and \eqref{G2+bis61}, we obtain \eqref{G2+bis7}.

This concludes the proof of Lemma \ref{F1-2}.
\end{proof}

The purpose of the next lemma is to prove that the functional $G_2(t)$ is coercive for (at least) large times. 

\begin{lem}\label{F11}
Assume that the hypotheses in Theorem \ref{blowup} are fulfilled for $u$  a solution of the Cauchy problem  \eqref{G-sys}. Then, there exists $T_1>1$ such that 
\begin{equation}
\label{F2postive}
G_2(t)\ge C_{G_2}\, \e, 
\quad\text{for all}\ t  \ge  T_1=A\left(1-\ln(\e)\right)^{\frac{1}{1+m}},
\end{equation}
where $C_{G_2}$ and $A$ two positive constants depending probably  on $u_0$, $u_1$, $N,m, \mu$ and $\nu$.
\end{lem}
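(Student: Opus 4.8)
The plan is to upgrade the one-sided bound of Lemma~\ref{F1-2} into genuine coercivity by deriving a closed first-order linear ODE for $G_2$ and integrating it, using the coercivity of $G_1$ (Lemma~\ref{F1}) as a positive source. Differentiating \eqref{F2def}, substituting the equation \eqref{G-sys} for $u_{tt}$, and using $\Delta\Psi=\Psi$ (so that $\int_{\R^N}t^{2m}\Delta u\,\psi\,dx=t^{2m}G_1(t)$ after integrating by parts on the compact support of $u$), $\psi_t=(\rho'/\rho)\psi$, and the definitions \eqref{F1def}, \eqref{F2def}, one is led --- rigorously through the weak formulation \eqref{energysol} --- to
\[
G_2'(t)+\tilde{\Gamma}(t)\,G_2(t)=\Big(t^{2m}-\frac{\nu^2}{t^2}\Big)G_1(t)+\int_{\R^N}|u_t(x,t)|^p\psi(x,t)\,dx,\qquad \tilde{\Gamma}(t):=\frac{\mu}{t}-\frac{\rho'(t)}{\rho(t)}.
\]
By \eqref{lambda'lambda1} one has $-\rho'/\rho\sim t^m\to+\infty$, so $\tilde{\Gamma}(t)>0$ for large $t$, while the nonlinear term on the right is nonnegative.

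Next I would fix a threshold $T_*\ge T_0$, independent of $\e$, large enough that both the coercivity $G_1(s)\ge C_{G_1}\,\e\,s^{-m}$ of Lemma~\ref{F1} holds and $s^{2m}-\nu^2/s^2\ge\frac12 s^{2m}$ for all $s\ge T_*$ (possible since $s^{2m+2}>\nu^2$ eventually; both requirements are $\e$-free). Multiplying the ODE by the integrating factor $t^{\mu}/\rho(t)$, whose logarithmic derivative is precisely $\tilde{\Gamma}$, discarding the nonnegative nonlinear contribution, and integrating from $T_*$ to $t$ yields
\[
G_2(t)\ge\frac{\rho(t)}{t^{\mu}}\left[\frac{T_*^{\mu}}{\rho(T_*)}\,G_2(T_*)+\int_{T_*}^t\frac{s^{\mu}}{\rho(s)}\Big(s^{2m}-\frac{\nu^2}{s^2}\Big)G_1(s)\,ds\right].
\]

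For the integral term I would insert $G_1(s)\ge C_{G_1}\e s^{-m}$, $s^{2m}-\nu^2/s^2\ge\frac12 s^{2m}$, and the bound $\rho(s)^{-1}\ge C_1^{-1}s^{-(\mu-m)/2}e^{\phi_m(s)}$ coming from \eqref{est-rho}, which reduces the integrand to a constant times $\e\,s^{(\mu+3m)/2}e^{\phi_m(s)}$. Writing $s^{(\mu+3m)/2}=s^{(\mu+m)/2}\phi_m'(s)$ and integrating by parts (a Laplace-type estimate) shows the bracketed integral is bounded below by $c\,\e\,t^{(\mu+m)/2}e^{\phi_m(t)}$ for $t$ large. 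Multiplying by the prefactor and using $\rho(t)/t^{\mu}\ge C_1^{-1}t^{-(\mu+m)/2}e^{-\phi_m(t)}$, the powers and exponentials cancel and this contributes a clean $\ge c'\,\e$ to $G_2(t)$. For the boundary term, Lemma~\ref{F1-2} gives $G_2(T_*)\ge -C'$ with $C'$ independent of $\e$ (depending only on $T_*,\nu,m,p,N$), so with $\rho(t)/t^{\mu}\le C_1 t^{-(\mu+m)/2}e^{-\phi_m(t)}$ this term is bounded below by $-C''t^{-(\mu+m)/2}e^{-\phi_m(t)}$.

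Finally I would balance the two contributions. Imposing $C''t^{-(\mu+m)/2}e^{-\phi_m(t)}\le\frac12 c'\,\e$, which (as $t^{-(\mu+m)/2}\le1$) reduces to $e^{-\phi_m(t)}\le c''\,\e$, i.e. $\phi_m(t)=\frac{t^{1+m}}{1+m}\ge -\ln\e+\mathrm{const}$, i.e. $t\ge A(1-\ln\e)^{1/(1+m)}=:T_1$ for $A$ chosen large enough, yields $G_2(t)\ge\frac12 c'\,\e=:C_{G_2}\,\e$ for all $t\ge T_1$ (after shrinking $\e_0$ so that $T_1\ge 2T_*$, which legitimizes the Laplace asymptotics). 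I expect the boundary term to be the main obstacle: Lemma~\ref{F1-2} controls $G_2(T_*)$ only from below by a genuinely negative, $\e$-free constant, so the whole scheme relies on its exponentially small weight $e^{-\phi_m(t)}$ eventually beating the $O(\e)$ positive reservoir built up by $G_1$ --- and it is exactly this competition that forces the $\e$-dependent waiting time $T_1\sim(1-\ln\e)^{1/(1+m)}$, with the delicate points being the calibration of $A$ and keeping the Laplace estimate of the source integral uniform in $\e$.
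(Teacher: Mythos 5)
Your proof is correct, and it shares the paper's overall skeleton --- integrate a first-order ODE for $G_2$ with an exact integrating factor, control the starting value by Lemma \ref{F1-2}, and let the exponentially decaying weight of that $\e$-independent negative constant be beaten by an $O(\e)$ source, which is precisely the competition that produces $T_1=A(1-\ln\e)^{1/(1+m)}$ --- but the mechanism generating the $\e$-source is genuinely different. The paper does not use the quantitative coercivity of $G_1$ as a source at all: starting from the same differential identity (obtained by differentiating \eqref{eq5bis}), it keeps only the fraction $\frac{3}{4}\Gamma(t)$ of the damping (with $\Gamma$ as in \eqref{gamma}), splits the remainder into $\Sigma_4$ and $\Sigma_5$ defined in \eqref{sigma1-exp}--\eqref{sigma2-exp}, bounds $\Sigma_4$ below by $C\e t^m$ \emph{plus} the nonlinear memory term $\frac{t^m}{4}\int_1^t\int|u_t|^p\psi\,dx\,ds$ by invoking the identity \eqref{eq5bis} (so its $\e t^m$ source comes from $\e\,C(u_0,u_1)$, i.e.\ directly from the data), uses Lemma \ref{F1} only for the \emph{sign} of $G_1$ in $\Sigma_5\ge 0$ (see \eqref{sigma2}), and then integrates with the factor $t^{3\mu/4}/\rho^{3/2}(t)$ as in \eqref{est-G111-bis}. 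You instead keep the full natural damping $\tilde\Gamma=\frac{\mu}{t}-\frac{\rho'}{\rho}$ with integrating factor $t^{\mu}/\rho(t)$, discard the nonlinearity entirely, and extract the $\e t^m$ source from the Tricomi term $(t^{2m}-\nu^2/t^2)G_1$ via the full strength of Lemma \ref{F1}; your Laplace-type lower bound for $\int s^{(\mu+3m)/2}e^{\phi_m(s)}\,ds$ plays the role of the paper's explicit computation in \eqref{est-G2-12}--\eqref{est-G2-123}, and your calibration of $A$ and the shrinking of $\e_0$ match \eqref{est-G1-2}. Both sources have the same order, so the constants come out the same way; since your source (via \eqref{est-G1}) ultimately also traces back to $\e\,C(u_0,u_1)$, the two arguments are two routings of the same positivity. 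What your route buys is economy for the lemma itself (no $\Sigma$-splitting, no fractional power of the damping); what the paper's route buys is the stronger differential inequality \eqref{G2+bis4}, which retains the memory term $\frac{t^m}{4}\int_1^t\int|u_t|^p\psi\,dx\,ds$ alongside the damping $\frac{3\Gamma}{4}$, and it is this inequality --- not merely the statement \eqref{F2postive} --- that Section \ref{sec-ut} reuses to run the functional $\mathcal{F}=8G_2-H$ to blow-up; with your version of the lemma one would have to rederive an analogous memory-carrying inequality at that stage.
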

 
\begin{proof}
Let $t \in [1,T)$. Recall the definitions  \eqref{test11}, \eqref{F1def} and \eqref{F2def}, together with
 \begin{equation}\label{def23}\d G_1'(t) -\frac{\rho'(t)}{\rho(t)}G_1(t)= G_2(t),\end{equation}
 then the equation  \eqref{eq6} yields
\begin{equation}
\begin{array}{l}\label{eq5bis}
\d G_2(t)+\left(\frac{\mu}{t}-\frac{\rho'(t)}{\rho(t)}\right)G_1(t)
=\d \int_1^t\int_{\R^N}|u_t(x,s)|^p\psi(x,s)dx \, ds +\e \, C(u_0,u_1).
\end{array}
\end{equation}
Differentiating in time the above identity and employing \eqref{lambda} and \eqref{def23}, we get
\begin{align}\label{F1+bis2}
\d G_2'(t)+\left(\frac{\mu}{t}-\frac{\rho'(t)}{\rho(t)}\right)G_2(t)+\left(-t^{2m}+\frac{\nu^2}{t^2}\right)G_1(t) =\int_{\R^N}|u_t(x,s)|^p\psi(x,t)dx. \nonumber
\end{align}
Remember the definition of $\Gamma(t)$, given by \eqref{gamma}, we deduce that 
\begin{equation}\label{G2+bis3}
\begin{array}{c}
\d G_2'(t)+\frac{3\Gamma(t)}{4}G_2(t)\ge\Sigma_4(t)+\Sigma_5(t)+\int_{\R^N}|u_t(x,t)|^p\psi(x,t)dx,
\end{array}
\end{equation}
where 
\begin{equation}\label{sigma1-exp}
\Sigma_4(t):=\d \left(-\frac{\rho'(t)}{2\rho(t)}-\frac{\mu}{4t}\right)\left(G_2(t)+\left(\frac{\mu}{t}-\frac{\rho'(t)}{\rho(t)}\right)G_1(t)\right),
\end{equation}
and
\begin{equation}\label{sigma2-exp}
\Sigma_5(t):=\d \left(t^{2m}+\left(\frac{\rho'(t)}{2\rho(t)}+\frac{\mu}{4t}\right) \left(\frac{\mu}{t}-\frac{\rho'(t)}{\rho(t)}\right) -\frac{\nu^2}{t^2}\right)  G_1(t).
\end{equation}
In view of the asymptotic result \eqref{lambda'lambda1} and using \eqref{eq5bis}, we obtain the existence of $\tilde{T}_1=\tilde{T}_1(m,\mu, \nu) \ge T_0$ such that
\begin{equation}\label{sigma1}
\d \Sigma_4(t) \ge C \, \e t^m + \frac{t^m}4 \int_1^t\int_{\R^N}|u_t(x,t)|^p\psi(x,s)dx \, ds, \quad \forall \ t \ge \tilde{T}_1. 
\end{equation}
Similarly, thanks to Lemma \ref{F1} and \eqref{lambda'lambda1}, we deduce the existence of $\tilde{T}_2=\tilde{T}_2(m,\mu, \nu) \ge \tilde{T}_1(m,\mu, \nu)$ ensuring that
\begin{equation}\label{sigma2}
\d \Sigma_5(t) \ge 0, \quad \forall \ t  \ge  \tilde{T}_2. 
\end{equation}
Now, combining \eqref{G2+bis3}, \eqref{sigma1} and \eqref{sigma2}, we deduce that
\begin{equation}\label{G2+bis4}
\begin{array}{l}
\d G_2'(t)+\frac{3\Gamma(t)}{4}G_2(t)\ge C_2 \, \e t^m+\int_{\R^N}|u_t(x,t)|^p\psi(x,t)dx \vspace{.2cm}\\
\d + \frac{t^m}4 \int_1^t\int_{\R^N}|u_t(x,t)|^p\psi(x,s)dx \, ds, \quad \forall \ t  \ge  \tilde{T}_2.
\end{array}
\end{equation}
By ignoring the nonlinear terms in \eqref{G2+bis4}, we simply write 
\begin{equation}\label{G2+bis41}
\begin{array}{l}
\d G_2'(t)+\frac{3\Gamma(t)}{4}G_2(t)\ge C_2 \, \e t^m, \quad \forall \ t  \ge  \tilde{T}_2.
\end{array}
\end{equation}
Integrating \eqref{G2+bis41} over $(\tilde{T}_2,t)$ after multiplication  by $\frac{t^{3\mu/4}}{\rho^{3/2}(t)}$, we obtain
\begin{align}\label{est-G111-bis}
 G_2(t)
\ge G_2(\tilde{T}_2)\frac{\rho^{3/2}(t)}{t^{3\mu/4}}+C_2\,{\e} \frac{\rho^{3/2}(t)}{t^{3\mu/4}}\int_{\tilde{T}_2}^t\frac{s^{m+3\mu/4}}{\rho^{3/2}(s)}ds, \quad \forall \ t  \ge  \tilde{T}_2.
\end{align}
Using \eqref{G2+bis7}, we can bound by below the quantity $G_2(\tilde{T}_2)$ as follows:
\begin{equation}\label{G2sup}
G_2(\tilde{T}_2) \ge - \tilde{\mathcal{K}},
\end{equation}
where $\tilde{\mathcal{K}}:=\mathcal{K} (\nu^2+ \nu^{\frac{2p}{p-1}})  e^{\frac{2p-1}{p-1}\phi_m(\tilde{T}_2)} \tilde{T}_2^{\frac{(m+1)(N-1)+3m}{2}}$.\\
Thanks to \eqref{est-rho}, \eqref{est-G111-bis} and \eqref{G2sup}, we infer the existence of  
 $\tilde{T}=\tilde{T}(m,\mu,\nu):=2\tilde{T}_2$ such that the following estimate holds true
\begin{align}\label{est-G2-12}
 G_2(t)
&\ge \d   -\tilde{\mathcal{K}}_1 t^{-\frac{3m}4} e^{-\frac{3t^{m+1}}{2m+2}}+ C\,{\e}  e^{-\frac{3t^{m+1}}{2m+2}} \int^t_{t/2} s^{m} e^{\frac{3s^{m+1}}{2m+2}}ds,
\end{align}
where $\tilde{\mathcal{K}}_1=\tilde{\mathcal{K}}C_1^{\frac32}$ ($C_1$ is used in \eqref{est-rho}).\\
A simple computation in the integral term in the above inequality gives that
\begin{align}\label{est-G2-123}
 G_2(t)
&\ge \d   -\tilde{\mathcal{K}}_1 t^{-\frac{3m}4} e^{-\frac{3t^{m+1}}{2m+2}}+ C\,{\e} \left(1- e^{-\frac{3t^{m+1}}{2m+2}\left(1- (\frac12)^{m+1}\right)}\right).
\end{align}
Consequently, for $\ep$ small, we deduce that
\begin{align}\label{est-G1-2}
 G_2(t)
\ge  C_{G_2}\,{\e}, \quad \forall \ t \ge T_1:=A\left(1-\ln(\e)\right)^{\frac{1}{1+m}},
\end{align}
 for $A>0$ depending on the parameters but not on $\e$.

 Therefore Lemma \ref{F11} is proven.
\end{proof}


\section{Proof of Theorem \ref{blowup}.}\label{sec-ut}

First, we introduce  the following functional:
\[
H(t):=
\int_{\tilde{T}_3}^t \int_{\R^N}|u_t(x,s)|^p\psi(x,s)dx ds
+C_3 \e,
\]
where $C_3=\min(C_2,8C_{G_2})$; $C_{G_2}$ is given by Lemma \ref{F11}, and $\tilde{T}_3>T_1$ is chosen so that $2t^{m}-\frac{3\Gamma(t)}{4}>0$ for all $t \ge\tilde{T}_3$ (this can be guaranteed by \eqref{gamma} and \eqref{lambda'lambda1}).\\
Now, the functional
$$\mathcal{F}(t):=8G_2(t)-H(t),$$
satisfies
\begin{equation}\label{G2+bis6}
\begin{array}{rcl}
\d \mathcal{F}'(t)+\frac{3\Gamma(t)}{4}\mathcal{F}(t) &\ge& \d \left(2t^{m}-\frac{3\Gamma(t)}{4}\right)\int_{\tilde{T}_3}^t \int_{\R^N}|u_t(x,s)|^p\psi(x,s)dx ds\vspace{.2cm}\\ &+&  \d 7\int_{\R^N}|u_t(x,t)|^p\psi(x,t) dx+C_3 \left(8t^{m}-\frac{3\Gamma(t)}{4}\right) \e\\
&\ge&0, \qquad \forall \ t \ge \tilde{T}_3.
\end{array}
\end{equation}
Integrating \eqref{G2+bis6} on $(\tilde{T}_3,t)$ after multiplication  by $\frac{t^{3 \mu/4}}{\rho^{3/2}(t)}$ yields
\begin{align}\label{est-G111}
 \mathcal{F}(t)
\ge \mathcal{F}(\tilde{T}_3)\frac{\tilde{T}_3^{3 \mu/4}\rho^{3/2}(t)}{t^{3 \mu/4}\rho^{3/2}(\tilde{T}_3)}, \quad \forall \ t \ge \tilde{T}_3,
\end{align}
where $\rho(t)$ is defined by \eqref{lmabdaK}.\\
Consequently, we infer that $\d \mathcal{F}(\tilde{T}_3)=8G_2(\tilde{T}_3)-C_3 \e \ge 8G_2(\tilde{T}_3)-8C_{G_2}\e \ge 0$; the positivity of $\mathcal{F}(\tilde{T}_3)$ is due to Lemma \ref{F11} and the definition of $C_3$, namely $C_3=\min(C_2,8C_{G_2}) \le 8C_{G_2}$. This implies in particular the positivity of $\mathcal{F}(t)$ for all $t \ge \tilde{T}_3$, and hence we have
\begin{equation}
\label{G2-est}
8G_2(t)\geq H(t), \quad \forall \ t \ge \tilde{T}_3.
\end{equation}
Thanks to  H\"{o}lder's inequality, \eqref{psi} and \eqref{F2postive}, we can see that
\begin{equation}
\begin{array}{rcl}
\d \int_{\R^N}|u_t(x,t)|^p\psi(x,t)dx &\geq&\d G_2^p(t)\left(\int_{|x|\leq R+\phi_m(t)-\phi_m(1)}\psi(x,t)dx\right)^{-(p-1)} \vspace{.2cm}\\ &\geq& C G_2^p(t) \rho^{-(p-1)}(t)e^{-(p-1)\phi_m(t)}(\phi_m(t))^{-\frac{(N-1)(p-1)}2}.
\end{array}
\end{equation}
Using \eqref{est-rho}, the above estimate   can be written as follows:
\begin{equation}\label{4.5}
\d \int_{\R^N}|u_t(x,t)|^p\psi(x,t)dx \geq C G_2^p(t) t^{-\frac{\left[(N-1)(m+1)-m+\mu\right](p-1)}{2}}, \ \forall \ t \ge \tilde{T}_3.
\end{equation}
Combining  \eqref{G2-est} and \eqref{4.5}, we conclude that
\begin{equation}
\label{inequalityfornonlinearin}
H'(t)\geq C H^p(t) t^{-\frac{\left[(N-1)(m+1)-m+\mu\right](p-1)}{2}}, \quad \forall \ t \ge \tilde{T}_3.
\end{equation}
Observe that $H(\tilde{T}_3)=C_3 \e>0$, 
then the upper bound of the lifespan estimate (as stated in Theorem \ref{blowup}) can be easily obtained. 

 This ends the proof of Theorem \ref{blowup}.

\appendix
\section{}\label{appendix1}
In this appendix, we will construct and give some properties of   $\rho(t)$ a solution of 
\begin{equation}\label{A-lambda}
\frac{d^2 \rho(t)}{dt^2}-t^{2m}\rho(t)-\frac{d}{dt}\left(\frac{\mu}{t}\rho(t)\right)+\frac{\nu^2}{t^2}\rho(t)=0, \quad t \ge 1.
\end{equation}
Then, we define
\begin{equation}\label{y-rho}
\y(\tau):=\rho(t); \quad \tau=\phi_m(t), \quad (\phi_m(t) \ \text{is defined in} \ \eqref{xi}),
\end{equation}
which satisfies
\begin{equation}\label{y-rho5}
\tau^{2}\frac{d^2 \y(\tau)}{d \tau^2}-\frac{\mu-m}{m+1}\tau\frac{d \y(\tau)}{d \tau}+\left[\frac{\mu+\nu^2}{(1+m)^2}-\tau^{2}\right]\y(\tau)=0, \quad \tau \ge \frac{1}{m+1}.
\end{equation}
Let
\begin{equation}\label{lmabdaK}
\d \y(\tau)= \tau^{\frac{\mu+1}{2m+2}}X(\tau).
\end{equation}
Then, we have
\begin{equation}\label{y-rho6}
\tau^{2}\frac{d^2 X(\tau)}{d \tau^2}+\tau\frac{d X(\tau)}{d \tau}-\left[\frac{\delta^2}{4(1+m)^2}+\tau^{2}\right]X(\tau)=0, \quad \tau \ge \frac{1}{m+1},
\end{equation}
where $\delta$ is given by \eqref{delta}.\\
Hence, the function  $\y(\tau)$ given by
\begin{equation}\label{lmabdaK}
\d \y(\tau)=C_m \tau^{\frac{\tilde{\mu}_m+1}{2}}K_{\frac{\sqrt{\de}}{2(1+m)}}(\tau), \quad \tau \ge \frac{1}{m+1},
\end{equation}
where $C_m=(m+1)^{\frac{\mu+1}{2}}$ and
\begin{equation}\label{bessel}
K_{\xi}(t)=\int_0^\infty\exp(-t\cosh \zeta)\cosh(\xi \zeta)d\zeta,\ \xi\in \mathbb{R},
\end{equation}
 is a solution of \eqref{y-rho5} which represents in fact  the modified Bessel equation of second kind of order $\nu$. \\
Therefore, the expression of $\rho(t)$ is given by
\begin{equation}\label{lmabdaK}
\d \rho(t)=t^{\frac{\mu+1}{2}}K_{\frac{\sqrt{\de}}{2(1+m)}}(\phi_m(t)), \quad \forall \ t \ge 1,
\end{equation}
where $\phi_m(t)$ is defined in \eqref{xi}.\\

Now, we can prove Lemma \ref{lem-supp}.
\begin{proof}[Proof of Lemma \ref{lem-supp}]
First, the positivity of $\rho(t)$ is obvious thanks to \eqref{bessel}. Then, from \cite{Gaunt},  the function $K_{\xi}(t)$ satisfies
\begin{equation}\label{Kmu}
K_{\xi}(t)=\sqrt{\frac{\pi}{2t}}e^{-t} (1+O(t^{-1})), \quad \text{as} \ t \to \infty.
\end{equation}
Combining \eqref{lmabdaK} and \eqref{Kmu}, and again remembering the definition of $\phi_m(t)$, given by \eqref{xi}, and the fact that $m > \lb$, we conclude \eqref{est-rho}. The  assertion {\bf (i)} is thus proven.

Now, to prove {\bf (ii)}, using \eqref{lmabdaK} we observe that 
\begin{equation}\label{lambda'lambda}
\d \frac{\rho'(t)}{\rho(t)}=\frac{\mu+1}{2t}+t^{m}\frac{K'_{\frac{\sqrt{\de}}{2(1+m)}}\left(\phi_m(t)\right)}{K_{\frac{\sqrt{\de}}{2(1+m)}}\left(\phi_m(t)\right)}.
\end{equation}
Exploiting the well-known identity for the modified Bessel function of second kind, 
\begin{equation}\label{Knu-pp}
\frac{d}{dz}K_{\nu}(z)=-K_{\nu+1}(z)+\frac{\nu}{z}K_{\nu}(z),
\end{equation}
and combining \eqref{lambda'lambda} and \eqref{Knu-pp}, we obtain
\begin{equation}\label{lambda'lambda2}
\d \frac{\rho'(t)}{\rho(t)}=\frac{\mu+1+\sqrt{\de}}{2t}-t^{m}\frac{K_{1+\frac{\sqrt{\de}}{2(1+m)}}\left(\phi_m(t)\right)}{K_{\frac{\sqrt{\de}}{2(1+m)}}\left(\phi_m(t)\right)}.
\end{equation}
From \eqref{Kmu} and \eqref{lambda'lambda2}, and using the fact that $m > \lb$, we deduce \eqref{lambda'lambda1}.

This ends the proof of Lemma \ref{lem-supp}.
\end{proof}

\section{}\label{appendix2}
Using Matlab, we will perform in this appendix some simple numerical simulations on the functional $F_2(t)$ using $F_1(t)$. Indeed, by varying the values of the different parameters in the problem (\ref{G-sys}), we will distinguish several cases that exhibit the behavior of the functional $F_2(t)$. The aim here is to confirm the results obtained in Lemmas \ref{F1-2} and \ref{F11}, and  to show the behavior of the functional $F_2(t)$, defined by \eqref{F2def1}, for different values of $\de=(\mu-1)^2-4\nu^2$ (this also gives the dynamics of $G_2(t)$ as well). 

Let us first start by recalling the relationship between $F_1(t)$ and $F_2(t)$  which reads as 
\begin{displaymath}\label{def231}\d F_2(t)=F_1'(t) +t^{m}F_1(t),\end{displaymath}
where $F_1(t)$ verifies \eqref{F1+bis1} but without the nonlinear term. \\
We recall here the identity \eqref{F1+bis1} that we write without the nonlinear term as follows:
\begin{align}\label{F1+bis1--app}
\frac{d}{dt} \left\{F_2(t)M(t)\right\}+   2M(t)t^{m}F_2(t)
= M(t)t^{m}\left(F_2(t)+t^{m}F_1(t)\right)-\frac{\nu^2M(t)}{t^2}F_1(t).
\end{align}
This yields
\begin{align}\label{F1+bis1-app}
\d \frac{\mu}{t}F_2(t) + F'_2(t) + t^{m}F_2(t)
= t^{2m}F_1(t)-\frac{\nu^2}{t^2}F_1(t),
\end{align}
which can be written as
\begin{align}\label{eq-F2}
\d F_1''(t) + \left( \frac{\mu}{t} + 2t^{m}\right)F'_1(t) + \left((m+\mu) t^{m-1}+\frac{\nu^2}{t^2}\right)F_1(t)=0.
\end{align}
We associate with \eqref{eq-F2} two positive initial data $F_1(1), F'_1(1)>0$.\\

The numerical treatment of \eqref{eq-F2} yields the graphs for $F_2(t)$  as shown in Figures 1--6.

\begin{figure}[ht] 
  \label{fig1} 
  \begin{minipage}[b]{0.5\linewidth}
    \centering
    \includegraphics[width=.5\linewidth]{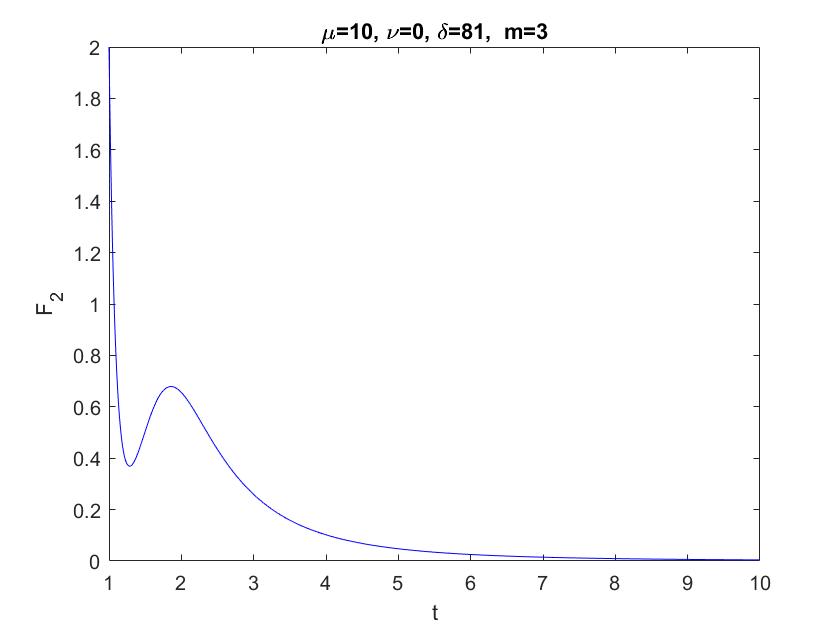} 
    \caption{\tiny{The case $\mu=10, \nu=0, m=3$ (the free-mass case with $\de >0$).}} 
    \vspace{4ex}
  \end{minipage}
  \begin{minipage}[b]{0.5\linewidth}
    \centering
    \includegraphics[width=.5\linewidth]{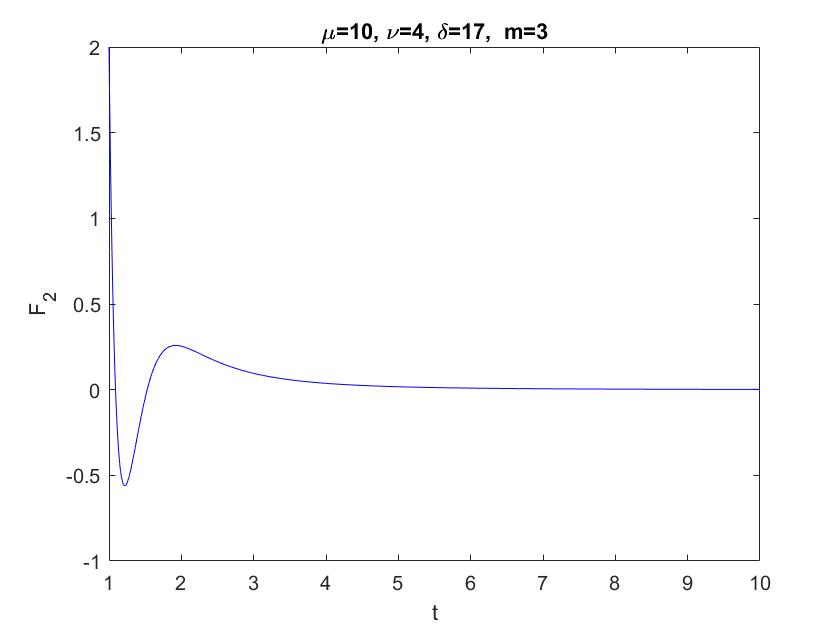} 
    \caption{\tiny{The case $\mu=10, \nu=4, m=3$ which corresponds to $\de >0$.}} 
    \vspace{4ex}
  \end{minipage} 
  \begin{minipage}[b]{0.5\linewidth}
    \centering
    \includegraphics[width=.5\linewidth]{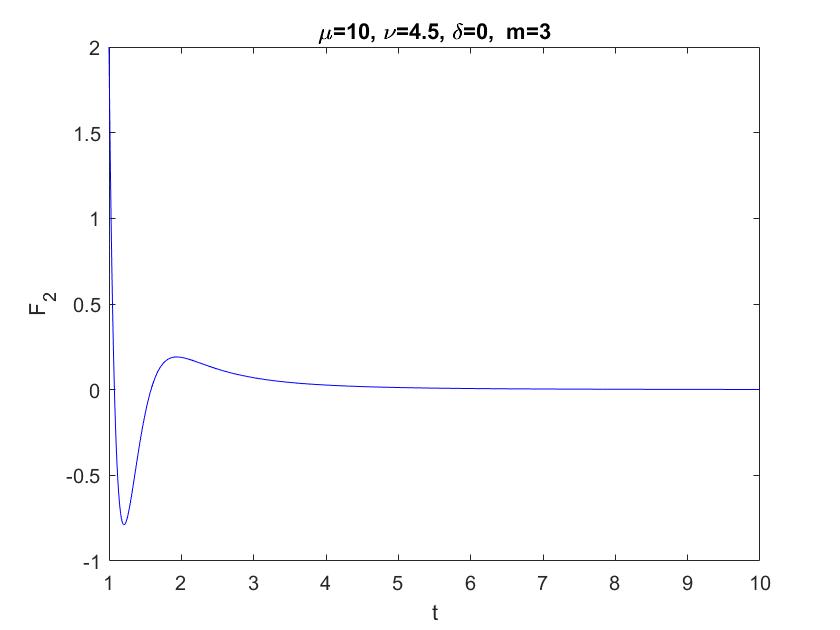} 
    \caption{\tiny{The case $\mu=10, \nu=4.5, m=3$ which corresponds to $\de =0$.}} 
    \vspace{4ex}
  \end{minipage}
  \begin{minipage}[b]{0.5\linewidth}
    \centering
    \includegraphics[width=.5\linewidth]{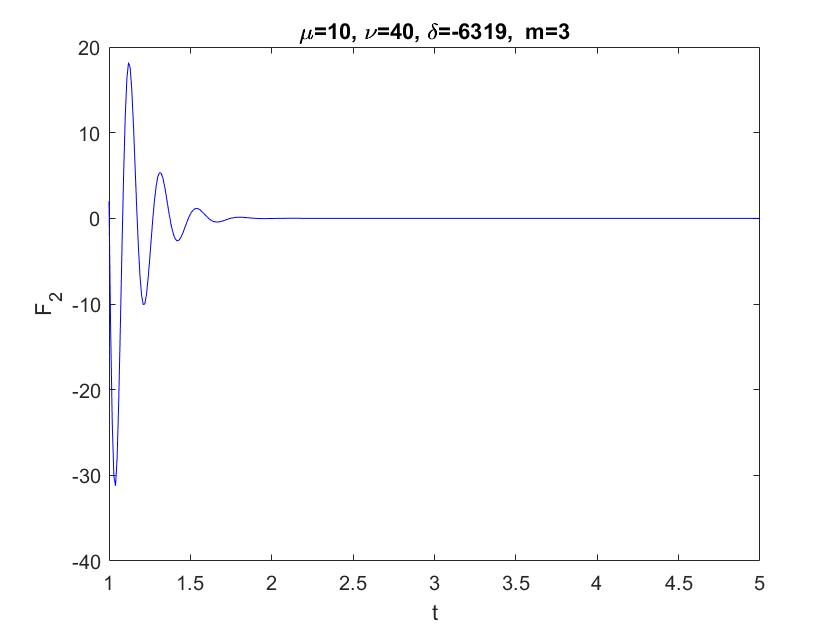} 
    \caption{\tiny{The case $\mu=10, \nu=40, m=3$ which corresponds to $\de <0$.}} 
    \vspace{4ex}
  \end{minipage} 
\end{figure}
\begin{figure}
    \centering
        \minipage{0.32\textwidth}
        \centering
    \includegraphics[width=.5\linewidth]{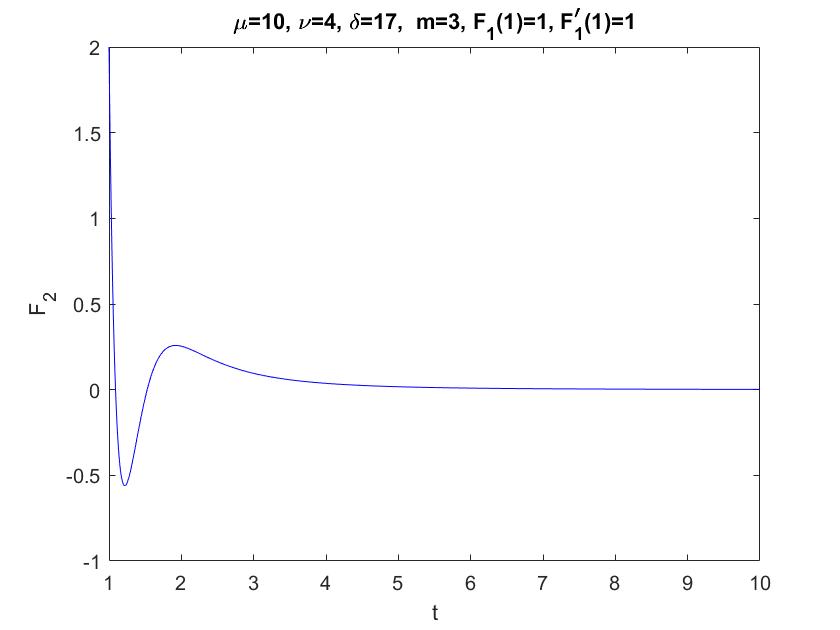} 
    \caption{\tiny{The case $\mu=10, \nu=4, m=3$  which corresponds to $\de >0$ with initial data  $F_1(1)=F'_1(1)=1$.}} 
    \endminipage\hfill
\minipage{0.32\textwidth}
\centering
    \includegraphics[width=.5\linewidth]{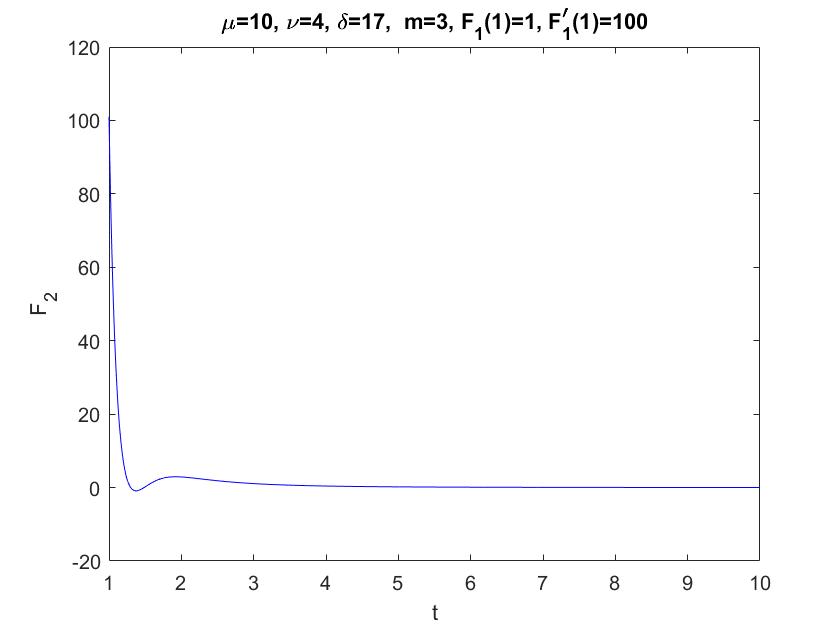} 
    \caption{\tiny{The case $\mu=10, \nu=4, m=3$  which corresponds to $\de >0$ with initial data  $F_1(1)=1, F'_1(1)=100$.}} 
    \endminipage\hfill
\minipage{0.32\textwidth}
\centering
    \includegraphics[width=.5\linewidth]{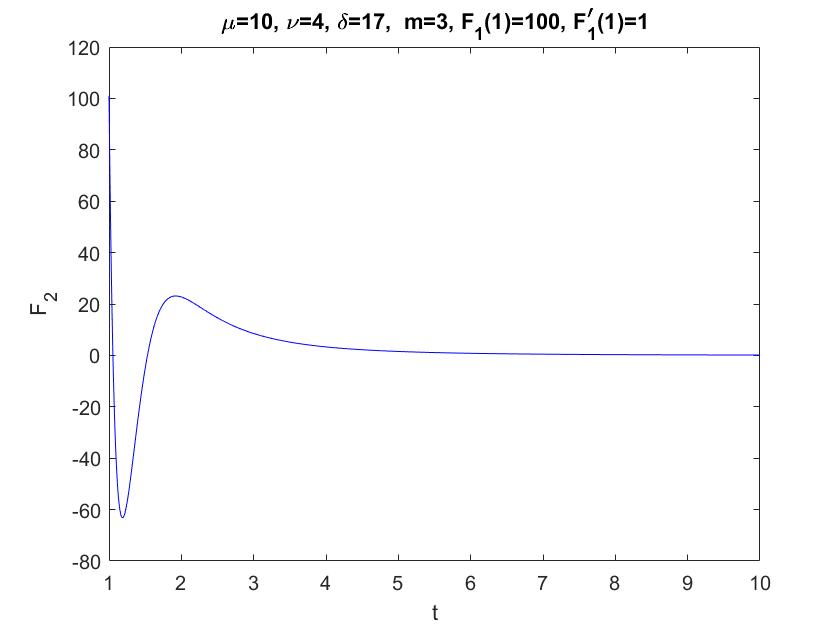} 
    \caption{\tiny{The case $\mu=10, \nu=4, m=3$  which corresponds to $\de >0$ with initial data  $F_1(1)=100, F'_1(1)=1$.}} 
   \endminipage 
\end{figure}

Some observations arise from the graphs of $F_2(t)$:
\begin{itemize}
\item The massless case ($\nu=0$) infers that  $F_2(t)$ is positive for all $t \ge 1$, and consequently the same conclusion holds for $G_2(t)$ (see Figure 1). This observation coincides  with our results in \cite{Our2} where the positivity of $F_2(t)$ and $G_2(t)$ is proved.
\item In Figures 2 and 3, where $\de \ge 0$,  a negative lower bound of $F_2(t)$ is clearly obtained although for relatively large time the functional  $F_2(t)$ is positive. However, some oscillations near $t=1$ are observed. 
\item In Figure  4, corresponding to the case  $\de <0$, we remark more and more oscillations near the initial time $t=1$. Note that the case $\de \le 0$ is out of the target of the present work but may be studied elsewhere.
\item In Figures 5, 6 and 7, we show the influence of the size of the initial data on  the behavior of the functional $F_2(t)$. More precisely, we remark that if the initial data $F_1(1)$ is much larger than $F'_1(1)$ then the negativity of the lower bound of the functional $F_2(t)$ is  enhanced.
\end{itemize}


\bibliographystyle{plain}

\end{document}